\theoremstyle{plain}
\newtheorem{theorem}{Theorem}[section]
\newtheorem{corollary}[theorem]{Corollary}
\newtheorem{lemma}[theorem]{Lemma}
\newtheorem{proposition}[theorem]{Proposition}
\newtheorem{fact}[theorem]{Fact}
\newtheorem*{theorem*}{Theorem}
\theoremstyle{definition}
\newtheorem{definition}[theorem]{Definition}
\theoremstyle{remark}
\newtheorem{remark}[theorem]{Remark}
\newtheorem{claim}{Claim}
\newcommand\F{\mathbb{F}}
\newcommand\LL{\mathscr{L}}
\newcommand{\acl}{\mathrm{acl}}
\newcommand{\cl}{\mathrm{cl}}
\newcommand{\alg}{\mathrm{alg}}
\newcommand{\ac}{\mathrm{ac}}
\newcommand{\eq}{\mathrm{eq}}
\newcommand{\tp}{\mathrm{tp}}
\def\seq{\subseteq}
\newcommand{\set}[1]{\left\{ {#1} \right\}}
\newcommand{\abs}[1]{\lvert {#1} \rvert}
\DeclareMathOperator{\Span}{Span}
\definecolor{airforceblue}{rgb}{0.36, 0.54, 0.66}
\def\IND{\setbox0=\hbox{$x$}\kern\wd0\hbox to 0pt{\hss$\mid$\hss}
\lower.9\ht0\hbox to 0pt{\hss$\smile$\hss}\kern\wd0}
\def\NotIND{\setbox0=\hbox{$x$}\kern\wd0\hbox to 0pt{\mathchardef\nn=12854\hss$\nn$\kern1.4\wd0\hss}\hbox to 0pt{\hss$\mid$\hss}\lower.9\ht0 \hbox to 0pt{\hss$\smile$\hss}\kern\wd0}
\def\ind{\mathop{\mathpalette\IND{}}}
\def\nind{\mathop{\mathpalette\NotIND{}}}
\def\indi#1{\mathop{\mathpalette\IND{}^{\!\!\!\!\rlap{$\scriptstyle{#1}$}\,\,\,\,}}}
\def\nindi#1{\mathop{\mathpalette\NotIND{}^{\!\!\!\rlap{$\scriptstyle{#1}$}\,\,\,}}}
\newcommand{\setword}[2]{%
  \phantomsection
  #1\def\@currentlabel{\unexpanded{#1}}\label{#2}%
}
\renewcommand{\models}{\vDash}
\begin{document}

\title{Note on a bomb dropped by Mr Conant and Mr Kruckman, and its consequences for the theory ACFG}

% \begin{abstract}
%     This note is a reaction to Conant and Kruckman paper `Three surprising instances of dividing' \cite{conant2023surprising}. It contains an erratum to the authors paper \cite{}
% \end{abstract}

\author[C. d'Elb\'{e}e]{Christian d\textquoteright Elb\'ee$^\dagger$}
\address{School of Mathematics, University of Leeds\\
Office 10.17f LS2 9JT, Leeds}
\email{C.M.B.J.dElbee@leeds.ac.uk}
\urladdr{\href{http://choum.net/\textasciitilde chris/page\textunderscore perso/}{http://choum.net/\textasciitilde chris/page\textunderscore perso/}}

\date{\today}

\maketitle

\section*{Introduction}

In \cite{conant2023surprising} Conant and Kruckman dropped a bomb on the model theory community by exhibiting a counterexample (actually 3) to a folklore fact which can be stated as follows:
\[A\indi d _C B\implies A\indi d _C \acl(BC) \quad \quad (\mathghost)\]
where $\indi d$ is dividing independence.  In the setting of \cite{delbee2023axiomatic}, they prove that in general $\indi d$ fails right \ref{CLO}, see also Subsection \ref{preliminaries} for axioms of independence relations. From this, statements such as $\indi d\to \indi a ^M$, where $A \indi a ^M _C B$ iff $\acl(AD)\cap \acl(BC) = \acl(C)$ for all $C\seq D\seq \acl(BC)$,
are not longer true and this has great and unforeseen consequences on several other papers, see \cite[Section 5]{conant2023surprising}. Among the results that has to be revised with the failure of $(\mathghost)$ are several general description of $\indi d$ in strictly NSOP$_1$ theories. This is also the case for the authors paper \cite{delbee21acfg} on the model-companion of the expansion of a field of fixed positive characteristic by an additive subgroup, otherwise known as ACFG. In particular, it was proved in \cite{delbee21acfg} that $\indi f = \indi d$ and that those are obtained by `forcing base monotonicity' on Kim-independence (see below for more details). In this note, we start in Section \ref{sec:ACFG} by giving two counterexample to $\indi f = \indi d$ in ACFG. The first one (Subsection \ref{subsection:ex1}) is really happening in the home sort (using in particular that algebraically closed fields have build-in codes for finite sets). The second one (Subsection \ref{subsection:ex2}) is closer to the counterexample \cite[Section 3.2]{conant2023surprising} and uses that models $(K,G)$ of ACFG interpret a generic (symmetric) binary map, namely $(x,y)\mapsto \pi(xy)$ where $\pi:K\to K/G$ is the canonical projection.

In Section \ref{sec:axiomaticnaivemon}, we expand on the subtleties around `forcing base monotonicity' and how this can be done with or without forcing the \ref{CLO} axiom. Define the following extensions of a given independence relation $\ind$ (essentially due to Adler \cite{adlerthesis,Adl08}):
\begin{align*}
    A\indi M _C B &:\iff A\ind_D B\text{ for all $C\seq D\seq \ac(BC)$}.\\
A\indi m _C B &:\iff A\ind_D B\text{ for all $C\seq D\seq BC$}.
\end{align*}

Until now, the `monotonisation' usually referred to $\indi M$ and was sometimes denoted $\indi m$ (sic), but we fix the notations above, following Adler and \cite{conant2023surprising}. In this note, $\indi m$ will be referred to as the `naive' monotonisation. \\

\subsection*{Caveat} In \cite{delbee21acfg}, there is a mistake in Definition 4.1, where the monotonisation $\indi m$ is defined as above but the intended definition and the definition used in the proofs of \cite{delbee21acfg} really is the one of $\indi M$. This is a notationnaly fortunate mistake but makes the problems associated to the failure of $(\mathghost)$ even more confused and this note more necessary. The reader should read every mention of `$\indi m, \indi w ^m$\ ' in \cite{delbee21acfg} as `$\indi M, \indi w ^M$\ '. Note also that $\indi M$ in Adler or in \cite{conant2023surprising} is $(\indi a)^M$ for us, where $A\indi a _C B$ if and only if $\acl(AC)\cap \acl(BC) = \acl(C)$. \\

Here is a list of updates on the consequences of the failure of $(\mathghost)$ in ACFG.
\begin{itemize}
    \item We still have $\indi f = \indi K ^M$, where $\indi K$ is Kim-independence, although the statement \cite[Proposition 4.14]{delbee21acfg} is now false. We expand on this just below.
    \item We have $\indi{da} = \indi f$, where $\indi {da}\ $ is defined below. This is similar to \cite[Subsection 5.3]{conant2023surprising}.
    \item We have $\indi d\to (\indi a )^M$, simply because we already have $\indi K\to (\indi a )^M$, see Remark \ref{rk:pregeommon}.
\end{itemize}

One of the main consequences of the failure of $(\mathghost)$ is that if $\indi d\to \ind$, then one does not necessarily have $\indi d\to \indi M$ but only $\indi d\to \indi m$. This simple observation make \cite[Proposition 4.14]{delbee21acfg} no longer true and thus the proof of $\indi f = \indi d$ obsolete. In Section \ref{sec:axiomaticnaivemon} we give a correct account of \cite[Proposition 4.14]{delbee21acfg}, Corollary \ref{cor:correctversionofprop4.14} below which is essentially \cite[Proposition 4.14]{delbee21acfg} but replacing $\indi M$ by $\indi m$. Unfortunately, Corollary \ref{cor:correctversionofprop4.14} cannot be applied in ACFG to conclude that $\indi d = \indi f$. Another variant of a correct statement of \cite[Proposition 4.14]{delbee21acfg} can be obtained with the following definition (\cite[Subsection 2.4]{conant2023surprising}):
    \[A\indi{da}_C B:\iff A\indi d_C \acl(BC).\]
    Now, $\indi d\to \ind$ implies $\indi{da}\to \indi M$. By reading the proof of \cite[Proposition 4.14]{delbee21acfg}, we obtain:\\

    \noindent \textbf{Correct version of \cite[Proposition 4.14]{delbee21acfg}.} Let $\ind$ be an invariant relation satisfying left and right \ref{MON} such that
    \begin{enumerate}
        \item $\indi{da} \to \ind$
        \item $\ind$ satisfies $\indi h$-amalgamation over models
        \item $\indi M$ satisfies \ref{EXT}
    \end{enumerate}
    then $\indi M = \indi f = \indi{da}$.\\

In ACFG, $(\indi K )^M$ satisfies \ref{EXT} (\cite[Corollary 4.18]{delbee21acfg}), and $\indi K$ satisfies $(1)$ and $(2)$ hence we conclude:
\[\indi f = \indi{da} = (\indi K )^M.\]

Recall the following notation from Adler, given an independence relation $\ind$:
\[A\indi * _C B \iff \text{for all $D\supseteq B$, there exists $A'\equiv_{BC} A$ with $A'\ind_C D$}.\]
It is in the current folklore on NSOP$_1$ theories that $\indi K ^{M*} = \indi f$. We observe (Proposition \ref{prop:twomonsameextenNSOP1}) that either version of the monotonisation are enough to get this result:
\[(\indi K ) ^{M*} = (\indi K )^{m*} = \indi f\]
In particular in ACFG we have $\indi K ^M = \indi K^{m^*}\neq \indi K ^m$. The picture of independence relations in ACFG can be summarize as follows:
    \begin{center}
        \begin{tikzcd}[column sep=small]
           \indi K ^{m*} \ar[equal]{d}  &    &  &  &  & \\
    \indi K ^M \ar[equal]{d} &    &  &  &  & \\
    \indi f  \ar{r} &  \indi d \ar{r}{=?} & \indi K ^m \ar{r} & \indi K \ar{r}  & \indi a ^M \ar{r} & \indi a\\
    \indi{da} \ar[equal]{u} &  & & & &
\end{tikzcd}
    \end{center}
where every arrow is strict, except maybe the one between $\indi d$ and $\indi K ^m$, for which we do not know.

\subsection*{Notations} We use $A^\alg$ for the field theoretic algebraic closure and $\indi \alg\ $ for the algebraic independence relation in the sense of fields. We use $A\indi h_C B$ if and only if $\tp(A/BC)$ is a coheir of $\tp(A/C)$.
\clearpage

\section{Forking and dividing differ for types}\label{sec:ACFG}

\subsection{Preliminaries}
Fix a prime number $p$. Let $\LL_G$ be the expansion of the language of rings by a unary predicate $G$. Let $T_G = T_G(p)$ be the $\LL_G$-theory of fields of characteristic $p$ in which $G$ is an additive subgroup. By \cite{delbee21acfg,dE21} $T_G$ admits a model-companion, denoted ACFG, which is NSOP$_1$ and not simple. We recall some facts about the theory ACFG, see \cite{delbee21acfg,dE21} for details.

\begin{fact}\label{fact:factACFG}
    Let $(K,G)$ be a sufficiently saturated model of ACFG, where $G$ is the generic subgroup of $(K,+)$. Let $A,A',B,C$ be small subsets of $K$. We denote by $G(A)$ the set  $G\cap A$.
    \begin{enumerate}
        \item $A\equiv_C A'$ if and only if there is an $\LL_G$-isomorphism $f$ over $C$ between the two $\LL_G$-structures \[((AC)^\alg,G((AC)^\alg)\cong ((A'C)^\alg, G((A'C)^\alg))\]
        which maps $A$ to $A'$.
        \item $A\indi K _C B$ if and only if $A\indi \alg_C B$ and $
        G((AC)^\alg+(BC)^\alg) = G((AC)^\alg)+G((BC)^\alg)$.
        \item $A\indi f_C B$ if and only if $A\indi K _D B$ for all $C\seq D\seq \acl(BC)$.
        
    \end{enumerate}
\end{fact}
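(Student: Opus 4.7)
The three parts of the Fact record material already established in \cite{delbee21acfg,dE21}, so the plan is to retrace those arguments rather than to produce new ones. For part (1), the approach is a back-and-forth on algebraic closures: ACFG being the model-companion of $T_G$, two tuples $A, A'$ over $C$ have the same type exactly when the corresponding $\LL_G$-substructures $((AC)^\alg, G\cap (AC)^\alg)$ and $((A'C)^\alg, G\cap (A'C)^\alg)$ are isomorphic over $C$, the point being that any such isomorphism $f$ extends by genericity of $G$ to a partial elementary map. Concretely one embeds the target $\LL_G$-structure into the monster model and uses $T_G$-extendability together with saturation to lift $f$ to an automorphism, yielding $A\equiv_C A'$. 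Standard quantifier-elimination manipulations and the Frobenius-stability of $G\cap A^\alg$ in characteristic $p$ take care of the remaining bookkeeping.

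For part (2), the plan follows the Chernikov-Kaplan-Ramsey recipe for NSOP$_1$ theories: define a candidate relation by the formula on the right-hand side and then verify invariance, symmetry, monotonicity, existence over models, strong finite character, and the independence theorem over models; uniqueness of Kim-independence in NSOP$_1$ theories then identifies this candidate with $\indi K$. Invariance and monotonicity are immediate; symmetry follows from the symmetry of $\indi{\alg}$ together with the symmetric shape of the identity $G(X+Y) = G(X)+G(Y)$ when $X\cap Y = C^\alg$. The hard part, and the likely main obstacle for the whole Fact, is the independence theorem: amalgamating two generic configurations requires simultaneously amalgamating algebraically independent field extensions and their additive $G$-parts, and it is here that the genericity of $G$ inside the model-companion is doing real work.

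For part (3), the right-hand side is literally $(\indi K)^M$ in the notation of the introduction, so the claim is $\indi f = (\indi K)^M$. Granting parts (1) and (2), the plan is to apply the corrected form of \cite[Proposition 4.14]{delbee21acfg} stated in the introduction: one checks (i) $\indi{da}\to\indi K$, immediate from the monotonicity built into part (2); (ii) $\indi K$ has $\indi h$-amalgamation over models, a minor strengthening of the independence theorem established in part (2); and (iii) $(\indi K)^M$ satisfies extension, for which one invokes \cite[Corollary 4.18]{delbee21acfg}. In view of the Caveat above, particular care must be taken throughout to distinguish $\indi m$ from $\indi M$: only the latter is the correct monotonisation for this identification.
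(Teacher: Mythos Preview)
The paper does not prove this Fact at all: it is stated as a recall of results from \cite{delbee21acfg,dE21}, with only a pointer to \cite[Subsection 2.1]{delbee21acfg} for how types are realised in ACFG. Your proposal correctly recognises this and offers a faithful outline of the arguments in those references; in particular, your treatment of part (3) via the corrected version of \cite[Proposition 4.14]{delbee21acfg} is exactly the re-justification the present paper gives in its introduction, so there is nothing to add.
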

See \cite[Subsection 2.1]{delbee21acfg} for an explicit description on how to prove that types are consistent in ACFG.

\subsection{$\indi d\neq \indi f$ in the theory ACFG}\label{subsection:ex1}
Fix a model $M$ of ACFG and a bigger model $(K,G)$ extending $M$. Let $d_1,d_2\in K$ be algebraically independent over $M$ with $G((Md_1d_2)^\alg) = G(M)$. Let $b = (d_1+d_2,d_1d_2)\in K^2$, it is classical that the tuple $b$ is a canonical parameter for the set $\set{d_1,d_2}$, in particular, $d_1,d_2\in (\F_p(b))^\alg$ hence $(Md_1d_2)^\alg = (Mb)^\alg$.
Let $a\in K$ be an element such that 
\[G((Mab)^\alg)) = G(M)\oplus \Span_{\F_p}(ad_2-d_1). \quad (\ast)\]
We leave to the reader to check that $G((Ma)^\alg) = G((Md_1d_2)^\alg) = G((Mad_1)^\alg) = G((Mad_2)^\alg = G(M)$.
We claim that $a\indi d _M b$ whereas $a\nindi f _M b$. 

First, we check that $a\nindi f _M b$. Assume otherwise, then as $d_1,d_2\in (Mb)^\alg$, we have $a\indi K_{Md_2} d_1b$ using Fact \ref{fact:factACFG} (3). Then $ad_2 - d_1\in G((Mad_2)^\alg+(Mb)^\alg)$ but $ad_2 - d_1\notin G((Mad_2)^\alg)+G((Mb)^\alg = G(M)$, contradicting Fact \ref{fact:factACFG} (2).

To prove that $a\indi d_M b$, we need a small claim.

\begin{claim}\label{claim:indisceniblevectorspace}
    Let $(d^n)_{n<\omega}$ be an infinite sequence of pairs $d_n = (d_1^nd_2^n)$ which is indiscernible in some theory of $\F_p$-vector space. Then one (and only one) of the three following configurations occur:
    \begin{enumerate}
        \item $(d_1^n)_{n<\omega}$ and $(d_2^n)_{n<\omega}$ are both infinite and $\set{d_1^n,d_2^m\mid n,m<\omega}$ is $\F_p$-linearly independent
        \item $(d_1^n)_{n<\omega}$ is constant and $\set{d_2^n\mid n<\omega}$ is $\F_p$-linearly independent over $d_1^0$.
        \item $(d_2^n)_{n<\omega}$ is constant and $\set{d_1^n\mid n<\omega}$ is $\F_p$-linearly independent over $d_2^0$.
    \end{enumerate}
\end{claim}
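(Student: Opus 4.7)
The plan is to first establish a general auxiliary fact about indiscernibility in the theory of $\F_p$-vector spaces: for any set of parameters $C$, an indiscernible sequence $(a^n)_{n<\omega}$ of singletons over $C$ is either constant or its set of terms is $\F_p$-linearly independent over $\Span_{\F_p}(C)$. I would prove this by a standard shift-and-minimize argument: take a non-trivial $\F_p$-linear relation modulo $\Span_{\F_p}(C)$ of minimal length $r\geq 2$, and compare it with the relation obtained by shifting the largest index $n_r$ to $n_r+1$ (which holds by indiscernibility); subtracting yields a relation whose only surviving term has coefficient equal to the leading coefficient of the original relation, forcing it to vanish unless the sequence is constant, contradicting minimality.

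Next, I would project the given sequence of pairs onto each coordinate to obtain indiscernible sequences $(d_1^n)_n$ and $(d_2^n)_n$, and apply the auxiliary fact to each. The case where both projections are constant is ruled out by infiniteness of the sequence of pairs. If exactly one projection is constant, say $(d_1^n)_n$ with value $d_1^0$, then $(d_2^n)_n$ is indiscernible over $d_1^0$ and non-constant, so the auxiliary fact applied with parameter $d_1^0$ yields that $\{d_2^n:n<\omega\}$ is $\F_p$-linearly independent over $d_1^0$, giving configuration (2); configuration (3) is symmetric.

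The remaining and main case is when both $(d_1^n)_n$ and $(d_2^n)_n$ are non-constant, where I must upgrade coordinate-wise linear independence to linear independence of the union. I would argue by contradiction: assume a non-trivial $\F_p$-linear relation $\sum_{i=1}^r(\alpha_i d_1^{p_i}+\beta_i d_2^{p_i})=0$ of minimal length. By indiscernibility, the same relation holds when $(p_1,\dots,p_r)$ is replaced by any strictly increasing $r$-tuple in $\omega$; differencing the relations obtained by shifting one index at a time yields constraints $\alpha_i(d_1^{p_i}-d_1^{p_i+1})+\beta_i(d_2^{p_i}-d_2^{p_i+1})=0$. Combined with the non-constancy of both coordinates, these constraints force all $(\alpha_i,\beta_i)$ to be nonzero and, via indiscernibility of the differences, to be pairwise proportional with a single ratio $\gamma\in\F_p^\ast$. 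Substituting back into the original relation and using the linear independence of $\{d_1^n\}_n$ from the auxiliary fact should then contradict minimality of $r$.

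The main obstacle lies in this last case: the shift argument naturally rigidifies any hypothetical relation to one essentially of the form $d_2^n=\gamma d_1^n+c$ for some $\gamma\in\F_p^\ast$ and fixed $c\in\Span_{\F_p}(d_1^0,d_2^0)$, and actually ruling out such a configuration requires using indiscernibility on longer tuples (not just length $r$ and $r+1$) --- or possibly invoking additional structure available in the ACFG context in which the claim is applied (for instance the algebraic independence of $d_1,d_2$ over $M$) --- in order to force $c$ into $\Span_{\F_p}(d_1^0)$ and deduce that one coordinate sequence would then have to be constant, a contradiction.
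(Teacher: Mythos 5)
Your auxiliary fact and your treatment of the degenerate cases are correct and essentially the same mechanism as the paper's (a shift-and-minimize argument on a minimal linear relation), and your reduction of configurations $(2)$ and $(3)$ to the one-variable statement over the parameter $d_1^0$ (resp.\ $d_2^0$) is clean. The obstacle you flag in the main case, however, is not a defect of your argument that more work would remove: it is a genuine counterexample to the Claim as stated. Fix linearly independent vectors $c,e_0,e_1,\dots$ in an infinite $\F_p$-vector space and set $d_1^n=e_n$, $d_2^n=e_n+c$. The sequence of pairs is indiscernible (all increasing tuples satisfy exactly the same linear relations), the sequence is infinite, both coordinate sequences are infinite and non-constant, yet $d_1^0-d_2^0-d_1^1+d_2^1=0$, so none of the three configurations holds. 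Your differencing step correctly rigidifies any putative relation to the form $d_2^n=\gamma d_1^n+c$ with $\gamma\in\F_p^\ast$ and $c=d_2^0-\gamma d_1^0$ fixed, and as the example shows this fourth configuration really occurs; no use of indiscernibility on longer tuples can exclude it. The Claim therefore needs a fourth clause, not a better proof.

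It is worth knowing that the paper's own proof breaks at exactly the point you identified: with $i_0=\max(I\cup J)$, the step ``by indiscernibility the set $\set{d_1^k\mid k\geq i_0}$ is included in the finite set $V$'' is only valid when $i_0\notin J$. If $i_0\in I\cap J$ (as in the example above, where $I=J=\set{0,1}$), shifting the index $i_0$ to $k$ also replaces $d_2^{i_0}$ by $d_2^k$, so the target span varies with $k$ and no constancy of $(d_1^n)_n$ follows. Your second suggested repair is the right one: the statement cannot be saved as a pure vector-space trichotomy, but the extra configuration is harmless where the Claim is applied, since in it both $(d_1^n)_n$ and $(d_2^n)_n$ are individually $\F_p$-linearly independent (by your auxiliary fact), which is what the verification of $(\ast\ast)$ and its variant for configuration $(1)$ actually uses; so one should either add the fourth clause and fold it into Case I, or restrict the Claim to the sequences arising in the application. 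As written, neither your argument nor the paper's closes the main case, because the trichotomy is false.
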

\begin{proof}
    Assume that the first case does not occur. Then there exists finite sets $I,J\seq \omega$ and $(\lambda_i)_{i\in I}$, $(\mu_j)_{j\in J}\seq \F_p\setminus \set{0}$ such that
    \[\sum_I \lambda_i d_1^i+\sum_J \mu_j d_2^j = 0. \quad (\star)\]
    Let $i_0 = \max (I\cup J)$. Assume that $i_0\in I$. Then $d_1^{i_0}\in V := \Span_{\F_p}(d_1^i,d_2^j\mid i\in I\setminus\set{i_0},j\in J)$ and by indiscernibility the set $\set{d_1^{k}\mid k\geq i_0}$ is included in the finite set $V$, which implies that $d_1^n = d_1^m$ for all $n\neq m$. In particular we may assume that $I = \set{i_0}$. If $J\neq \emptyset$, let $j_0 = \max J$. Reasoning as above, $(\star)$ implies that $\set{d_2^k\mid k\geq j_0}$ is included in a translate of $W = \Span_{\F_p}(d_1^{i_0},d_2^j\mid j\in J\setminus\set{j_0})$ hence by indiscernibility $(d_2^n)_{n<\omega}$ is constant. This contradicts that $(d_1^n,d_2^n)_{n<\omega}$ is infinite hence $(2)$ holds.

    If $i_0\in J$, by a similar argument one conclude that $(3)$ holds.
\end{proof}

To prove that $a\indi d _C b$, let $(b_n)_{n<\omega}$ be any infinite indiscernible sequence in $\tp(b/M)$. For each $n$ there is $d_1^n,d_2^n$ such that $b_n = (d_1^n+d_2^n, d_1^nd_2^n)$ and we may assume that $(b_nd_1^nd_2^n)_{n<\omega}$ is indiscernible over $M$. Note that $(d_1^n,d_2^n)_{n<\omega}$ is infinite. By the claim there are three configurations which reduce to the two following two cases:
\begin{itemize}
    \item \textbf{Case I.} $(d_2^n)_{n<\omega}$ is infinite. This corresponds to configurations $(1)$ and $(2)$ from Claim \ref{claim:indisceniblevectorspace}.
    \item \textbf{Case II.} $(d_2^n)_{n<\omega}$ is constant, this is configuration $(3)$ of Claim \ref{claim:indisceniblevectorspace}
\end{itemize}

\textbf{Case I}. Assume that we are in the configuration $(2)$, i.e. $(d_1^n)_{n<\omega}$ is constant and $(d_2^n)_{n<\omega}$ is $\F_p$-independent over $d_1^0$. We now prove that there exists $a'$ algebraically independent over $(M(b_n)_n)^\alg$ and such that
\[G((Ma'(b_n)_n)^\alg) = G(M) \oplus \bigoplus_{n<\omega}\Span_{\F_p}(a'd_2^n-d_1^0).\]
Assume that such $a'$ exists, then it is an easy exercise to check that $G((Ma'b_n)^\alg) = G(M)\oplus\Span_{\F_p}(a'd_2^n-d_1^n)$ for each $n<\omega$. By Fact \ref{fact:factACFG} (1), we conclude $a'b_n\equiv_M ab$ for all $n$, hence $a\indi d_M b$.
In order to check that such $a'$ exists, it suffices to prove that if $x$ algebraically independent over $K$, then
\[[G(K) \oplus \bigoplus_{n<\omega}\Span_{\F_p}(xd_2^n-d_1^0)]\cap K = G(K)  \quad (\ast\ast)\]
and then use that $(K,G)$ is existentially closed in the $\LL_G$ structure $(K(x)^\alg, H)$ where $H = G(K) \oplus \bigoplus_{n<\omega}\Span_{\F_p}(xd_2^n-d_1^0))$. We check $(\ast\ast)$: assume that $g_K+\sum_n \lambda_n(xd_2^n-d_1^0)\in K$ for some $g_K\in G(K)$ and $\lambda_n\in \F_p$. Using that $x$ is transcendental over $K$, we get $\sum_n \lambda_nd_2^n =0$. As $(d_2^n)_n$ is linearly independent, we get $\lambda_n = 0$ for all $n$, hence $g_K+\sum_n \lambda_n(xd_2^n-d_1^0)\in G(K)$. The other inclusion is trivial.

Configuation $(1)$ is treated similarly, with $(\ast \ast)$ replaced by 
\[[G(K) \oplus \bigoplus_{n<\omega}\Span_{\F_p}(xd_2^n-d_1^n)]\cap K = G(K).\]

\textbf{Case II.} In this case one easily sees\footnote{To get such an $a'$, one would consider $x$ algebraically independent over $K$ and check that $[G(K) \oplus \bigoplus_{n<\omega}\Span_{\F_p}(xd_2^0-d_1^n)]\cap K = G(K)$. Take $g_K+\sum_n \lambda_n(xd_2^0-d_1^n)\in K$ with $g_K\in G(K)$. Then, as $x$ is transcendental over $K$, we get $(\sum_n \lambda_n)d_2^0 = d_1^n$ which has a nontrivial solution, e.g. if $\sum_n\lambda_n = 0$ and $\lambda_n\neq 0$. } that there is no $a'$ algebraically independent over $M(b_n)_n$ with 
\[G((Ma'(b_n)_n)^\alg) = G(M) \oplus \bigoplus_{n<\omega}\Span_{\F_p}(a'd_2^0-d_1^n).\]
However, interverting the roles of $d_1^n$ and $d_2^n$ and reasoning as in Case I, there is $a'$ such that 
\[G((Ma'(b_n)_n)^\alg) = G(M) \oplus \bigoplus_{n<\omega}\Span_{\F_p}(a'd_1^n-d_2^0)\]
Then $a'd_2^nd_1^n \equiv_M ad_1d_2$ for all $n<\omega$, which implies that $a'b^n\equiv_M ab$ for all $n<\omega$, so we conclude $a\indi d _M b$.

\subsection{$\indi d\neq \indi f$ using a generic binary function in ACFG}\label{subsection:ex2}

In ACFG$^\eq$, a different witness of $\indi d\neq \indi f$ can be described, which seems very similar to the counter-example to $\indi d\neq \indi f$ in the generic binary function.

Imaginaries in ACFG have been described in \cite[Section 3]{delbee21acfg}. Assume that $(K,G)$ is a model of ACFG and let $\pi : K \to K/G$ be the canonical projection and we consider the two sorted structure $(K,K/G,\pi)$ where $K$ has the full field structure and $K/G$ the group structure. We consider $(K,K/G,\pi)$ as a substructure of $(K,G)^\eq$. Then \cite[Theorem 3.15]{delbee21acfg} states that $(K,K/G,\pi)$ weakly eliminates imaginaries. 
The structure $(K,K/G,\pi)$ can be seen as a ``generic forgetting" structure where the map $\pi: K\to K/G$ is a generic map only preserving the additive group structure in the field $K$. Precisely because $\pi$ does not preserves the multiplicative structure of $K$, one sees that $(x,y)\mapsto \pi(xy)$ is a generic binary (and symmetric) map.

It is an easy exercise to prove that $A\indi K_C B$ if and only if $A\indi \alg _C B$ and $\pi((AC)^\alg)\cap \pi((BC)^\alg) = \pi(C^\alg)$. Then $\indi f$ is given by forcing base monotonicity and algebraic extension on $\indi K$.

The second counterexample to $\indi d = \indi f$ in ACFG$^\eq$ is the following. Start with an elementary substructure $(M,\pi(M))$ of $(K,K/G)$ and let $u_1,u_2,d_1,d_2,a$ be as follows:
\begin{itemize}
    \item $u_1,u_2\in K/G$ are $\F_p$-linearly independent over $\pi(M)$.
    \item $d_1,d_2\in K$ are algebraically independent over $M$ with $\pi(d_i) = u_i$ and $\pi((Md_i)^\alg = \pi(M)\oplus \Span_{\F_p}(u_i)$, for $i = 1,2$.
    \item $a$ is algebraically independent over $Md_1d_2$ and $\pi(ad_1) = 0$, $\pi(ad_2) = u_1$ and $\pi(Mad_1d_2) = \pi(M)\oplus \Span_{\F_p}(u_1,u_2)$.
\end{itemize} 
Then for $b = (d_1+d_2, d_1d_2)$ we have $a\nindi f_M b$ and $a\indi d_M b$. The proof is similar to the one above. Note however that the two examples differ greatly. In the first example we have $G((Mab)^\alg)) = G(M)\oplus \Span_{\F_p}(ad_2-d_1)$, so the group in $(Mab)^\alg$ is `small' but in the second example we have $\pi(Mad_1d_2) = \pi(M)\oplus \Span_{\F_p}(u_1,u_2)$ hence the group in $(Mab)^\alg$ is `big'.

\section{Naive monotonisation}\label{sec:axiomaticnaivemon}

\subsection{Preliminary facts}\label{preliminaries}
Here is a complete list of the axioms of independence relations, those are the same as in \cite{delbee2023axiomatic}.\\

\framebox{
\begin{minipage}{0.90\textwidth}
\textsc{Definition} (Axioms of independence relations).
\begin{enumerate}[$(1)$]
\item (\setword{finite character}{FIN}) If $a\ind_C B$ for all finite $a\seq A$, then $A\ind_C B$.
  \item (\setword{existence}{EX}) $A\ind_C C$ for any $A$ and $C$.
  \item (\setword{symmetry}{SYM}) If $A\ind_C B$ then $B\ind_C A$.
  \item (\setword{local character}{LOC}) For all $A$ there is a cardinal $\kappa = \kappa(A)$ such that for all $B$ there is $B_0\seq B$ with $\abs{B_0} <\kappa$ with $A\ind_{B_0} B$.
  \item (right \setword{normality}{NOR}) If $A\ind_C B$ then $A\ind_C BC$.
\item (right \setword{monotonicity}{MON}) If $A\ind_C BD$ then $A\ind_C B$.
\item (right \setword{base monotonicity}{BMON}) Given $C\seq B\seq D$ if $A\ind_C D$ then $A\ind_{B} D$.
  \item (right \setword{transitivity}{TRA}) Given $C\seq B\seq D$, if $A\ind_{C} B$ and $A\ind_B D$ then $A\ind_C D$.
    \item (\setword{anti-reflexivity}{AREF}) If $a\ind_C a$ then $a\in \cl(C)$;
    \item (right \setword{closure}{CLO}) $A\ind_C B \implies  A\ind _C \cl(B)$.
    \item (\setword{strong closure}{SCLO}) $A\ind_C B \iff  \cl(AC)\ind _{\cl(C)} \cl(BC)$.
  \item (\setword{strong finite character}{STRFIN}) If $a\nind_C B$ there exists $\phi(x)\in\tp(a/BC)$ such that $a'\nind _C B$ for all $a'\models \phi(x)$.
  \item (\setword{extension}{EXT}) If $A\ind_C B$ then for any $D\supseteq B$ there is $A'\equiv_{BC} A$ with $A'\ind_C D$.
  \item (\setword{full existence}{FEX}) For all $A,B,C$ there exists $A'\equiv_C A$ such that $A'\ind _C B$.
  \item (\setword{the independence theorem}{INDTHM} over models) Let $M$ be a small model, and assume $A\ind_M B$, $C_1\ind_M A$, $C_2\ind_M B$, and $C_1\equiv_M C_2$. Then there is a set $C$ such that $C\ind_M AB$, $C\equiv_{MA}C_1$, and $C\equiv_{MB}C_2$.
  \item (\setword{stationarity}{STAT} over models) Let $M$ be a small model, and assume $C_1\ind_M A$, $C_2\ind_M A$, and $C_1\equiv_M C_2$. Then $C_1\equiv_{MA} C_2$.
\end{enumerate}
\end{minipage}
}

Results in this section are classical and appear for instance in \cite{delbee2023axiomatic}. 

\begin{definition}\label{def:mo,otonisation}
    Let $\ind$ be a ternary relation and $\cl$ a closure operator. We associate the \textit{monotonisation $\indi M$ of $\ind$} which is defined as the following:
    \[A\indi M_C B\iff A\ind_D B \text{ for all $D$ with $C\seq D\seq \cl(BC)$}.\]
    We define the relation $\indi *$:
    \[A\indi * _C B \iff \text{for all $D\supseteq B$, there exists $A'\equiv_{BC} A$ with $A'\ind_C D$}\]
\end{definition}

\begin{proposition}[\cite{delbee2023axiomatic}, Proposition 1.2.14]\label{prop:monotonisation}
  The relation $\indi{M} $ satisfies right \ref{BMON}. \begin{itemize}
      \item If $\ind$ satisfies left or right \ref{MON}, left or right \ref{CLO}, left \ref{NOR}, so does $\indi M$. If $\ind$ further satisfies right \ref{NOR} or left \ref{TRA}, then so does $\indi M$. 
      \item If $\ind $ satisfies left or right \ref{CLO} then so does $\indi M$.
      \item If $\indi 0\to \ind$ and $\indi 0$ satisfies the right-sided instance of: \ref{NOR}, \ref{MON}, \ref{CLO} and \ref{BMON}, then $\indi 0\rightarrow \indi M$.
  \end{itemize}
\end{proposition}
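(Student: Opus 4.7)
My plan is uniform: for each property I would unwind the definition
\[
A \indi M_C B \iff \forall D\ (C \seq D \seq \cl(BC) \implies A \ind_D B)
\]
and either apply the corresponding property of $\ind$ inside the quantifier, or rewrite the range of admissible bases $D$. First I observe that right \ref{BMON} is immediate: for $C \seq B \seq D$ and $A \indi M_C D$, any $E$ with $B \seq E \seq \cl(BD) = \cl(D)$ also satisfies $C \seq E \seq \cl(D)$, so $A \ind_E D$ is already given by hypothesis, whence $A \indi M_B D$.

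For the right-sided axioms of the first two bullets (\ref{MON}, \ref{CLO}, \ref{NOR}) I would argue pointwise in $D$: fix $D \in [C, \cl(BC)]$ and invoke the corresponding property of $\ind$. The only care needed is to check that the range of bases in the conclusion still matches $[C, \cl(BC)]$: for right \ref{CLO} this amounts to $\cl(\cl(B)C) = \cl(BC)$, which follows from idempotency and monotonicity of $\cl$; for right \ref{NOR} it amounts to $\cl(BCC) = \cl(BC)$ together with right \ref{MON} of $\ind$ to pass from $A \ind_D BD$ back to $A \ind_D BC$. The left-sided axioms (\ref{MON}, \ref{NOR}, \ref{CLO}, left \ref{TRA}) commute cleanly with the outer $\forall D$, since they leave the base untouched; at each admissible $D$ one simply applies the corresponding axiom of $\ind$. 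The one mildly tricky case is left \ref{NOR}, whose natural output is $AD \ind_D B$, so to obtain the desired $AC \ind_D B$ one combines it with left \ref{MON} --- which is why both appear together in the hypothesis.

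For the transfer bullet, I would fix $A \indi 0_C B$ and $D \in [C, \cl(BC)]$ and chain the four right-sided axioms of $\indi 0$: right \ref{NOR} gives $A \indi 0_C BC$; right \ref{CLO} gives $A \indi 0_C \cl(BC)$; right \ref{BMON} gives $A \indi 0_D \cl(BC)$; right \ref{MON} gives $A \indi 0_D B$; and finally $\indi 0 \to \ind$ yields $A \ind_D B$. Since $D$ was arbitrary in $[C, \cl(BC)]$, this is exactly $A \indi M_C B$.

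The hardest part, to the extent there is one, is bookkeeping of how the interval $[C, \cl(BC)]$ transforms under replacements of $B$ (by $\cl(B)$, by $BC$, or by $BD$), together with the small interplay between left \ref{NOR} and left \ref{MON} noted above; no deeper idea is required.
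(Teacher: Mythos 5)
Your argument is correct and is the standard one: the paper itself gives no proof of this proposition (it is quoted from \cite{delbee2023axiomatic}), but your unwinding of the interval $[C,\cl(BC)]$ axiom by axiom, and in particular your chain right \ref{NOR} $\to$ right \ref{CLO} $\to$ right \ref{BMON} $\to$ right \ref{MON} for the transfer bullet, is exactly the route the paper takes for the analogous Proposition \ref{prop:naivemonotonisation} (where the \ref{CLO} step is simply omitted). The one inaccuracy is your claim that the left-sided axioms ``leave the base untouched'': left \ref{TRA} does move the base (from $C$ to $AC$, say), so its preservation is not a pointwise-in-$D$ triviality but requires the same interval bookkeeping you use elsewhere, namely checking that $AD\in[AC,\cl(ABC)]$ whenever $D\in[C,\cl(BC)]$ before applying left \ref{TRA} of $\ind$ at base $D$; with that adjustment the proof goes through.
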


We always have $\indi * \to \ind$. By definition, if $\indi 0 \to \ind$ and $\indi 0$ satisfies \ref{EXT}, then $\indi 0\to \indi *$.

\begin{proposition}[\cite{delbee2023axiomatic}, Proposition 4.1.17]\label{prop:forcingextpreservation}
    If $\ind$ is invariant and satisfies left and right \ref{MON} then $\indi *$ is invariant and satisfies left and right \ref{MON}, right \ref{NOR}, \ref{EXT} and right \ref{CLO}. If $\ind$ satisfies one of the following property: right \ref{BMON}, left \ref{TRA}, left \ref{NOR}, \ref{AREF} then so does $\indi *$.
\end{proposition}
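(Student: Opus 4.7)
The plan is to check each listed property of $\indi *$ in turn, grouping the routine verifications and isolating the one genuinely nontrivial step. Invariance of $\indi *$ is immediate from invariance of $\ind$: an automorphism $\sigma$ transports a witness $A' \equiv_{BC} A$ with $A' \ind_C D$ to $\sigma(A') \equiv_{\sigma(BC)} \sigma(A)$ with $\sigma(A') \ind_{\sigma(C)} \sigma(D)$. Left and right \ref{MON}, right \ref{NOR}, and the four conditional properties (right \ref{BMON}, left \ref{TRA}, left \ref{NOR}, \ref{AREF}) all follow by unfolding the defining existential in $\indi *$ and appealing to the analogous property of $\ind$. For instance, if $A \indi *_C BE$ and $F \supseteq B$, apply the hypothesis to $F \cup E \supseteq BE$ to obtain $A' \equiv_{BEC} A$ with $A' \ind_C FE$, then use right \ref{MON} of $\ind$ to extract $A' \ind_C F$, with $A' \equiv_{BC} A$ a trivial consequence; for \ref{AREF}, taking $D = a$ in $a \indi *_C a$ forces $a' = a$ (since $a$ is a parameter in the type equivalence $a' \equiv_{aC} a$) and so $a \ind_C a$, giving $a \in \cl(C)$ by \ref{AREF} of $\ind$.

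The real content lies in \ref{EXT} for $\indi *$. Given $A \indi *_C B$ and $D \supseteq B$, I would produce $A' \equiv_{BC} A$ with $A' \indi *_C D$ via a saturation argument. Choose inside the monster $\mathfrak{C}$ a small set $D^* \supseteq D$ realizing every small type over $DC$. Apply the hypothesis $A \indi *_C B$ to $D^* \supseteq B$ to obtain $A' \equiv_{BC} A$ with $A' \ind_C D^*$. To verify $A' \indi *_C D$, fix any small $E \supseteq D$; strong homogeneity of $\mathfrak{C}$ furnishes $\sigma \in \Aut(\mathfrak{C}/DC)$ with $\sigma(E) \subseteq D^*$, whence right \ref{MON} for $\ind$ gives $A' \ind_C \sigma(E)$, and applying invariance of $\ind$ to $\sigma^{-1}$ (which fixes $DC$ pointwise) yields $\sigma^{-1}(A') \ind_C E$ together with $\sigma^{-1}(A') \equiv_{DC} A'$. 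Thus $A'' := \sigma^{-1}(A')$ witnesses the defining clause of $A' \indi *_C D$ at $E$.

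Right \ref{CLO} for $\indi *$ then comes essentially for free from \ref{EXT} combined with invariance of $\indi *$: given $A \indi *_C B$, apply \ref{EXT} with $D = \cl(B) \supseteq B$ to obtain $A' \equiv_{BC} A$ with $A' \indi *_C \cl(B)$; pick $\sigma \in \Aut(\mathfrak{C}/BC)$ sending $A$ to $A'$, observe that $\sigma$ fixes $\cl(B)$ setwise (closure operators are stable under automorphisms fixing their input), and conclude $A \indi *_C \cl(B)$ by transporting through $\sigma^{-1}$ and invariance of $\indi *$. The main obstacle in the whole argument is exactly the saturation trick used for \ref{EXT}: once that is in hand, every remaining assertion reduces to type-theoretic bookkeeping through the defining existential of $\indi *$ and the corresponding property of $\ind$.
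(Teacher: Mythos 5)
Most of your verifications are fine: invariance, left and right \ref{MON}, right \ref{NOR}, right \ref{BMON}, left \ref{NOR} and \ref{AREF} really are unfoldings of the existential (left \ref{TRA} needs slightly more --- you must move the middle set $B$ by an automorphism over $AC$ before applying the second hypothesis --- but it goes through), and deriving right \ref{CLO} from \ref{EXT} plus invariance is the standard route, the same one the paper uses elsewhere. The genuine gap is exactly the step you flag as the main obstacle: your proof of \ref{EXT} rests on ``a small set $D^*\supseteq D$ realizing every small type over $DC$'', and no such set exists. A small type over $DC$ may have any small number $\lambda$ of variables, and the type asserting that $\lambda$ variables are pairwise distinct forces any realization to have cardinality $\lambda$; letting $\lambda$ range over all cardinals below the saturation of the monster, a set realizing all small types over $DC$ cannot itself be small. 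So the object into which your automorphism argument transports an arbitrary $E\supseteq D$ does not exist, and \ref{EXT} --- and with it right \ref{CLO}, which you deduce from it --- is not established.

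The correct argument replaces the universal superset by boundedness of the number of relevant types. By invariance, whether $A'\indi *_C D$ holds depends only on $\tp(A'/DC)$, and there are at most $2^{|T|+|A|+|DC|}$ (a small number of) complete types over $DC$ extending $\tp(A/BC)$. Suppose no realization of any of these types satisfies $\indi *_C D$; then for each such type $p_i$ choose a witness $E_i\supseteq D$ such that no realization of $p_i$ satisfies $\ind_C E_i$, and set $E=\bigcup_i E_i$, which is still small. Applying $A\indi *_C B$ to $E\supseteq B$ gives $A'\equiv_{BC}A$ with $A'\ind_C E$, hence $A'\ind_C E_{i_0}$ by right \ref{MON}, where $p_{i_0}=\tp(A'/DC)$ --- a contradiction. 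This union-of-witnesses argument (Adler's, and the one behind Proposition 4.1.17 of \cite{delbee2023axiomatic}) uses only invariance and right \ref{MON}, so it fits the hypotheses of the statement; you should substitute it for the $D^*$ construction.
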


\begin{theorem}[\cite{delbee2023axiomatic}, Theorem 4.1.24]\label{thm:forcingBMON+EXTforkingnaive}
    Let $\indi 0$ be an invariant relation satisfying right \ref{MON}, right \ref{BMON} and \ref{LOC} (for instance, take $\indi 0 = \indi h$).

  Let $\ind$ be an invariant relation satisfying left and right \ref{MON} and the following property ($\indi 0$-amalgamation over models):
  \begin{center}
      if $c_1\equiv_M c_2$ and $c_1\ind_M a$, $c_2\ind_M b$ and $a\indi 0 _M b$
      then there exists $c$ with $c\ind_E ab$ and $c\equiv_{Ma} c_1$, $c \equiv_{Mb} c_2$.
  \end{center}
  Then $\indi{M}^{*} \rightarrow \indi f$. 
\end{theorem}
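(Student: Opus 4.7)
The plan is to first establish the weaker implication $\indi{M}^{*}\to \indi d$ and then upgrade to $\indi f$ using that $\indi{M}^{*}$ itself satisfies \ref{EXT}. To set this up, observe that $\indi M$ is invariant and inherits left and right \ref{MON} from $\ind$ by Proposition \ref{prop:monotonisation}, so $\indi{M}^{*}$ satisfies \ref{EXT} by Proposition \ref{prop:forcingextpreservation}. Granting $\indi{M}^{*}\to \indi d$: for any $a\indi{M}^{*}_M b$ and any $D\supseteq b$, \ref{EXT} produces $a'\equiv_{Mb} a$ with $a'\indi{M}^{*}_M D$, hence $a'\indi d_M D$, which is exactly $a\indi f_M b$.

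To prove $\indi{M}^{*}\to \indi d$, fix $a\indi{M}^{*}_M b$ and any $\phi(x,b)\in\tp(a/Mb)$; the goal is that $\phi(x,b)$ does not divide over $M$. First, using \ref{LOC}, right \ref{MON} and right \ref{BMON} of $\indi 0$ together with an Erd\H{o}s--Rado / EM-type extraction (or, in the canonical case $\indi 0 = \indi h$, a direct coheir-Morley construction), build an $M$-indiscernible sequence $(b_n)_{n<\omega}$ in $\tp(b/M)$ with $b_0=b$ and $b_{\leq n}\indi 0_M b_{n+1}$ for every $n$. The core of the argument is then an inductive construction of elements $a_n$ with $a_n\equiv_M a$, $\models\phi(a_n,b_i)$ for every $i\leq n$, and $a_n\ind_M b_{\leq n}$. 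The base case $a_0:=a$ works since $\indi{M}^{*}\to\ind$.

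For the inductive step, transport $(a,b)$ via an $M$-automorphism $\sigma$ with $\sigma(b)=b_{n+1}$ to get $a^\sigma:=\sigma(a)$ satisfying $\models \phi(a^\sigma, b_{n+1})$ and $a^\sigma\indi{M}^{*}_M b_{n+1}$. Applying the definition of $\indi{M}^{*}$ with $D=b_{\leq n+1}$ yields $a^*\equiv_{Mb_{n+1}} a^\sigma$ with $a^*\indi M_M b_{\leq n+1}$; in particular $a^*\ind_M b_{n+1}$ by right \ref{MON} on $\indi M$, and $\models\phi(a^*,b_{n+1})$. Now invoke the $\indi 0$-amalgamation hypothesis over $M$ with sides $b_{\leq n}$ and $b_{n+1}$ ($\indi 0$-independent by construction) and amalgamands $c_1:=a_n$, $c_2:=a^*$, both of type $\tp(a/M)$ and each $\ind$-independent over $M$ from its designated side by induction and by the previous step. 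The output $c$ satisfies $c\ind_M b_{\leq n+1}$, $c\equiv_{M b_{\leq n}} a_n$ (forcing $\phi(c,b_i)$ for $i\leq n$), and $c\equiv_{M b_{n+1}} a^*$ (forcing $\phi(c,b_{n+1})$); set $a_{n+1}:=c$.

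Hence $(a_n)_n$ finitely realises $\{\phi(x,b_n):n<\omega\}$, which is therefore consistent, showing $\phi(x,b)$ does not divide over $M$. As $\phi\in\tp(a/Mb)$ was arbitrary, $a\indi d_M b$; the first paragraph then concludes $\indi{M}^{*}\to\indi f$. The main delicate point is producing the sequence $(b_n)_n$ as simultaneously $\indi 0$-Morley and $M$-indiscernible --- standard, but making essential use of all three of \ref{LOC}, right \ref{MON}, right \ref{BMON} on $\indi 0$; the remaining work is careful bookkeeping to align the outputs of $\indi{M}^{*}$ with the hypotheses of $\indi 0$-amalgamation at each inductive step.
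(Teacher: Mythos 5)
There is a genuine gap in the reduction to dividing, and it concerns the quantifier over indiscernible sequences. To show that $a\indi d_C b$ you must show that \emph{every} $C$-indiscernible sequence $(b_n)_{n<\omega}$ with $b_0=b$ admits a common realisation of $\{\tp(a/Cb)(x,b_n) : n<\omega\}$ (equivalently, that each $\phi(x,b)\in\tp(a/Cb)$ has $\{\phi(x,b_n)\}_n$ consistent for every such sequence). You instead \emph{build} one particular sequence that is $\indi 0$-Morley over the base and verify consistency along it; this shows nothing about the arbitrary indiscernible sequences that could witness dividing. The paper's proof (given for the $\indi m^*$ variant as Theorem \ref{thm:forcingBMON+EXTforkingnaive2}) starts from an \emph{arbitrary} $C$-indiscernible sequence and then invokes Lemma \ref{lm:LOCcharactergivesMS} to produce a model $M\supseteq C$ over which that \emph{given} sequence becomes an $\indi 0$-Morley sequence --- this is where \ref{LOC}, right \ref{MON} and right \ref{BMON} of $\indi 0$ are actually used, via Erd\H{o}s--Rado, and it is not something you can arrange over the original base. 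Your closing remark that the ``delicate point is producing the sequence'' confirms the inversion: the sequence is handed to you, not produced by you.

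A second, related omission: the theorem is over arbitrary base sets, but you take the base to be a model $M$ from the outset (which you need for the amalgamation hypothesis). In the correct argument the base $C$ is arbitrary, the model $M\supseteq C$ comes out of Lemma \ref{lm:LOCcharactergivesMS}, and one must then transfer the hypothesis upward: use \ref{EXT} of $\indi M^*$ to find $a'\equiv_{Cb}a$ with $a'\indi M^*_C bM$, and then right \ref{BMON} and right \ref{MON} to get $a'\ind_M b$ before starting the induction. Both of these steps are absent from your write-up. The rest of your argument --- the automorphism transport, the inductive use of $\indi 0$-amalgamation to build $a_n$ realising $\phi(x,b_i)$ for $i\le n$ with $a_n\ind_M b_{\leq n}$, and the final upgrade from $\indi d$ to $\indi f$ via $\indi f=\indi d^{\,*}$ using \ref{EXT} of $\indi M^*$ --- matches the paper's proof and is fine once the sequence and the base are set up correctly.
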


The following is a standard use of Erd\"os-Rado and compactness.
\begin{lemma}[\cite{delbee2023axiomatic}, Lemma 3.2.18]\label{lm:LOCcharactergivesMS}
    Let $(b_i)_{i<\omega}$ be a $C$-indiscernible sequence and $\ind$ be an invariant relation satisfying right \ref{MON}, right \ref{BMON} and \ref{LOC}. Then there exists a model $M$ containing $C$ such that $(b_i)_{i<\omega}$ is an $M$-indiscernible $\ind$-Morley sequence over $M$.
\end{lemma}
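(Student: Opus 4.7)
\smallskip
\noindent\textbf{Proof plan.} I would combine compactness with \ref{LOC} applied pointwise, a Fodor-type pressing-down argument, and a final Erd\H{o}s--Rado extraction. Let $\kappa=\kappa(b_0)$ be the cardinal given by \ref{LOC}, uniform along the sequence by invariance of $\ind$. Fix a regular cardinal $\lambda$ much larger than $\kappa+|T|+|C|$, and by compactness and $C$-indiscernibility extend $(b_i)_{i<\omega}$ to a $C$-indiscernible sequence $(b_i)_{i<\lambda}$. For each $i<\lambda$, \ref{LOC} together with right \ref{MON} provides $S_i\seq i$ with $|S_i|<\kappa$ such that $b_i\ind_{C\cup\{b_j:j\in S_i\}}\{b_j:j<i\}$. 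Applying Fodor to the regressive map $i\mapsto\sup S_i$ on the club of ordinals $i<\lambda$ of cofinality $>\kappa$ yields a stationary $Y\seq\lambda$ and an ordinal $\mu<\lambda$ with $S_i\seq\mu$ for every $i\in Y$.

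I would then produce a model $M\preceq\mathfrak U$ with $C\cup\{b_j:j<\mu\}\seq M\seq C\cup\{b_j:j<\mu'\}$ for some $\mu<\mu'<\min Y$, after possibly passing to the tail $Y\cap[\mu',\lambda)$. Such an $M$ is obtained by choosing $\mu$ above the L\"owenheim--Skolem number of $T$ and using compactness plus $C$-indiscernibility to absorb the Skolem closure of $C\cup\{b_j:j<\mu\}$ into a longer indiscernible prefix. With such $M$ in hand, for every $i\in Y$ the base $C\cup\{b_j:j\in S_i\}$ sits inside $M$ which sits inside $\{b_j:j<i\}$; right \ref{BMON} thus yields $b_i\ind_M\{b_j:j<i\}$, and right \ref{MON} gives $b_i\ind_M(b_{i'})_{i'\in Y,\,i'<i}$. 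Hence $(b_i)_{i\in Y}$ is $\ind$-Morley over $M$.

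Finally, by Erd\H{o}s--Rado and compactness I would extract an $M$-indiscernible subsequence $(c_n)_{n<\omega}$ of $(b_i)_{i\in Y}$; invariance of $\ind$ together with right \ref{MON} imply that the Morley property is inherited, since each relation $c_n\ind_M(c_0,\dots,c_{n-1})$ pulls back, via the realisation of its $M$-type by an increasing tuple from $(b_i)_{i\in Y}$, to a true instance of the Morley condition for that subsequence. As $(c_n)_{n<\omega}$ shares its $C$-EM-type with the original $(b_i)_{i<\omega}$, a $C$-automorphism of $\mathfrak U$ transports $M$ to a model $M'\supseteq C$ over which $(b_i)_{i<\omega}$ itself is $M'$-indiscernible and $\ind$-Morley. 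The main obstacle I anticipate is the middle step: arranging a model $M$ which simultaneously contains $C\cup\{b_j:j<\mu\}$ and sits inside $\{b_j:j<i\}$ for $i\in Y$, so that right \ref{BMON} can actually be applied with $M$ as the new base; everything else is routine bookkeeping with invariance, right \ref{MON}, and right \ref{BMON}.
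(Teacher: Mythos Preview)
The paper does not prove this lemma; it merely cites \cite{delbee2023axiomatic} and calls it ``a standard use of Erd\H{o}s--Rado and compactness.'' Your outline is indeed the standard one, and you have correctly located the only nontrivial step.

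Two small corrections first: the set of $i<\lambda$ of cofinality $>\kappa$ is stationary, not a club (which is all Fodor needs); and the passage from the raw output of \ref{LOC} to $b_i\ind_{C\cup\{b_j:j\in S_i\}}\{b_j:j<i\}$ uses right \ref{BMON} (enlarging the base), not right \ref{MON}.

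The real issue is the one you flag. There is in general no model $M$ with $C\cup\{b_j:j<\mu\}\subseteq M\subseteq C\cup\{b_j:j<\mu'\}$: a set of the form $C\cup\{b_j:j<\mu'\}$ has no reason to be an elementary substructure, and the Skolem hull of a prefix is not ``absorbed'' into a longer prefix. The clean repair is to Skolemise \emph{before} stretching. Replace $(b_i)$ by a sequence $(b_i')_{i<\lambda}$ that is $C$-indiscernible in an expansion $\LL^{\mathrm{Sk}}$ by Skolem functions and has the same $\LL$-EM-type over $C$ (Ramsey/Erd\H{o}s--Rado gives this). Now apply \ref{LOC} with right-hand side $M_i:=\dcl^{\mathrm{Sk}}(C\cup\{b_j':j<i\})$; each $B_0^i$ lies in some $\dcl^{\mathrm{Sk}}(C\cup\{b_j':j\in S_i\})$ with $|S_i|<\kappa$, and Fodor yields $\mu$ as before. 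Setting $M:=\dcl^{\mathrm{Sk}}(C\cup\{b_j':j<\mu\})$ gives an honest model with $B_0^i\subseteq M\subseteq M_i$ for $i\in Y$, so right \ref{BMON} applies and $b_i'\ind_M M_i$. Moreover $\LL^{\mathrm{Sk}}$-indiscernibility over $C$ makes the tail $(b_i')_{i\in Y,\,i>\mu}$ automatically $M$-indiscernible, so no further Erd\H{o}s--Rado extraction is needed; the final automorphism step then goes through exactly as you describe.
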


\subsection{Naive monotonisation}

\begin{definition}\label{def:mo,otonisation}
    Let $\ind$ be a ternary relation. We associate the \textit{naive monotonisation $\indi{m}$ of $\ind$} which is defined as the monotonisation with respect to the trivial closure operator:
    \[A\indi m_C B\iff A\ind_D B \text{ for all $D$ with $C\seq D\seq BC$}.\]
\end{definition}

\begin{proposition}\label{prop:naivemonotonisation}
  The relation $\indi{m} $ satisfies right \ref{BMON}. 
  \begin{itemize}
      \item If $\ind$ satisfies left or right \ref{MON}, left \ref{NOR}, so does $\indi m$. If $\ind$ further satisfies right \ref{NOR} or left \ref{TRA}, then so does $\indi m$. 
      \item If $\indi 0\to \ind$ and $\indi 0$ satisfies the right-sided instance of: \ref{NOR}, \ref{MON} and \ref{BMON}, then $\indi 0\rightarrow \indi m$.
  \end{itemize}
\end{proposition}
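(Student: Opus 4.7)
The plan is to mirror Proposition~\ref{prop:monotonisation} with the closure operator taken to be the identity; if anything this simplifies matters, since there is no right or left \ref{CLO} to preserve. Right \ref{BMON} is immediate from the definition: if $A\indi m_C D$ and $C\seq B\seq D$, then any $E$ with $B\seq E\seq BD=D$ also satisfies $C\seq E\seq CD=D$, so by hypothesis $A\ind_E D$, and hence $A\indi m_B D$.

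For the remaining preservation items the strategy is uniform: unpack $A\indi m_C B$ as the universal assertion ``$A\ind_D B$ for every $D$ with $C\seq D\seq BC$'', apply the relevant property of $\ind$ pointwise in $D$, and repackage. Left \ref{MON} transfers at once. Right \ref{MON} follows because the parameter range for $A\indi m_C B$ sits inside that of $A\indi m_C BB'$ (since $BC\seq BB'C$), so one simply restricts the quantifier and applies right \ref{MON} of $\ind$ at each $D$. For left \ref{NOR}, from $A\ind_D B$ one applies left \ref{NOR} of $\ind$ to get $AD\ind_D B$, then left \ref{MON} to cut this down to $AC\ind_D B$. Right \ref{NOR} is handled symmetrically, using right \ref{MON} to reduce from $A\ind_D BD$ to $A\ind_D BC$. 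Left \ref{TRA} is the usual chase through two ranges of bases $[C,CA]$ and $[B,BA]$, handled by splitting on whether the test base $E$ contains $B$ or not.

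For the last bullet, fix $A\indi 0_C B$ and any $D$ with $C\seq D\seq BC$. Apply right \ref{NOR} of $\indi 0$ to obtain $A\indi 0_C BC$; right \ref{BMON} of $\indi 0$ then gives $A\indi 0_D BC$ (permissible since $C\seq D\seq BC$); and right \ref{MON} of $\indi 0$ yields $A\indi 0_D B$. Using $\indi 0\to \ind$ we conclude $A\ind_D B$, and as $D$ was arbitrary, $A\indi m_C B$.

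There is no serious obstacle here: the entire argument is a formal unpacking of definitions, essentially identical to the proof of Proposition~\ref{prop:monotonisation} with closure operators deleted throughout. The only mild bookkeeping subtlety is that preservation of left \ref{NOR} requires left \ref{MON} (and symmetrically for right \ref{NOR}), which is why these properties are bundled together in the statement's hypotheses.
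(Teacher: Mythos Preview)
Your proposal is correct and matches the paper's approach exactly: the paper likewise obtains right \ref{BMON} and the first item by invoking Proposition~\ref{prop:monotonisation} with the trivial closure operator, and your argument for the last item is line-for-line the same as the paper's (right \ref{NOR} to add $C$, right \ref{BMON} to move the base to $D$, right \ref{MON} to drop back to $B$, then $\indi 0\to\ind$). The only difference is that you unpack the first item in more detail where the paper simply cites Proposition~\ref{prop:monotonisation}.
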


\begin{proof}
The beginning and the first item are obtained by applying Proposition \ref{prop:monotonisation} with the trivial closure operator.

We prove the last item. If $A\indi 0 _C B$, then by \ref{NOR}, we have $A\indi 0 _C BC$. Then, by \ref{BMON}, for all $D$ with $C\seq D\seq BC$ we have $A\indi 0 _D BC$ so $A\indi 0 _D B$ by \ref{MON}. As $\indi 0\to \ind$, we get $A\ind_D B$ . We conclude that $A\indi m_C B$. 
\end{proof}

\begin{corollary}\label{cor:forcingBMONandEXT}
    If $\ind$ is invariant and satisfies left and right \ref{MON} then $\indi m ^*$ is invariant and satisfies left and right \ref{MON}, right \ref{NOR}, right \ref{CLO}, right \ref{BMON} and \ref{EXT}. Further $\indi m ^*\to \indi m\to \ind$.
\end{corollary}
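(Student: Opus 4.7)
The plan is to chain the two preservation propositions already proved in the paper, namely Proposition \ref{prop:naivemonotonisation} (for the step from $\ind$ to $\indi m$) and Proposition \ref{prop:forcingextpreservation} (for the step from $\indi m$ to $\indi m^*$). The statement is essentially a formal corollary with no genuine difficulty.

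First I would pass from $\ind$ to $\indi m$ using Proposition \ref{prop:naivemonotonisation}. Invariance of $\indi m$ is immediate: its defining clause only mentions subset inclusions and the relation $\ind$, both preserved under automorphisms of the monster, since $\ind$ is invariant. The hypothesis that $\ind$ satisfies left and right \ref{MON} then transfers left and right \ref{MON} to $\indi m$ via the first item of Proposition \ref{prop:naivemonotonisation}. Moreover, right \ref{BMON} for $\indi m$ holds unconditionally, by the opening line of that same proposition.

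Next I would apply Proposition \ref{prop:forcingextpreservation} with $\indi m$ playing the role of $\ind$: the required hypotheses (invariance, left and right \ref{MON}) were just established. The main conclusion gives that $\indi m^*$ is invariant and satisfies left and right \ref{MON}, right \ref{NOR}, \ref{EXT} and right \ref{CLO}. Since $\indi m$ additionally satisfies right \ref{BMON}, the second (optional) clause of that proposition transfers right \ref{BMON} to $\indi m^*$, which completes the list of properties claimed.

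Finally, the chain $\indi m^* \to \indi m \to \ind$ is entirely formal. The first arrow follows from the definition of the $*$-operation by taking $D = B$ and using the invariance of $\indi m$ already obtained, to replace the witness $A' \equiv_{BC} A$ by $A$ itself. The second arrow follows from the definition of $\indi m$ by taking $D = C$, which trivially satisfies $C \seq C \seq BC$. The only point worth flagging is that invariance of $\indi m$ must be in hand before extracting $\indi m^* \to \indi m$; otherwise the proof is pure bookkeeping, with all substantive work already absorbed into the two cited propositions.
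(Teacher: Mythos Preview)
Your proof is correct and follows the same overall strategy as the paper: apply Proposition~\ref{prop:naivemonotonisation} to pass from $\ind$ to $\indi m$, then Proposition~\ref{prop:forcingextpreservation} to pass from $\indi m$ to $\indi m^*$. The only difference is in the justification of $\indi m^* \to \indi m$: you argue directly from the definition of the $*$-operator (take $D=B$ and use invariance of $\indi m$), whereas the paper instead invokes the last item of Proposition~\ref{prop:naivemonotonisation} with $\indi 0 = \indi m^*$, using that $\indi m^* \to \ind$ and that $\indi m^*$ satisfies right \ref{NOR}, \ref{MON}, \ref{BMON}. Your route is shorter and avoids the apparent redundancy in the paper's argument (which needs $\indi m^* \to \ind$ as input, a fact most naturally obtained via $\indi m^* \to \indi m \to \ind$ anyway).
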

\begin{proof}
    By Proposition \ref{prop:naivemonotonisation}, $\indi m$ satisfies left and right \ref{MON} and right \ref{BMON}. By Proposition \ref{prop:forcingextpreservation}, $\indi m ^*$ satisfies right \ref{MON}, right \ref{BMON} and \ref{EXT}. It is standard that \ref{EXT} implies right \ref{NOR} and right \ref{CLO}. As $\indi m ^*\to \ind$ and $\indi m ^*$ satisfies the right-sided versions of \ref{NOR}, \ref{MON} and \ref{BMON}, Proposition \ref{prop:naivemonotonisation} applies and $\indi m ^*\to \indi m$.
\end{proof}

\begin{lemma}\label{lm:twomonsamextension}
    Assume that $\ind$ satisfies left and right \ref{NOR} and \ref{MON} then $\indi M^* = \indi m ^*$.
\end{lemma}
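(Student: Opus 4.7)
The plan is to establish the equality $\indi M^* = \indi m^*$ by proving the two inclusions separately, with essentially all of the content sitting in a simple closure trick for the reverse direction.

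For $\indi M^*\to\indi m^*$, I would first observe that $\indi M\to\indi m$ holds unconditionally: any $E$ with $C\seq E\seq BC$ automatically satisfies $C\seq E\seq \cl(BC)$, so if $A\indi M_C B$ then $A\indi m_C B$. This implication lifts to the starred versions directly from the definition of $(-)^*$, since any witness $A'\equiv_{BC}A$ for $\indi M^*$ at a given $D\supseteq B$ is automatically a witness for $\indi m^*$ at the same $D$.

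For the reverse direction $\indi m^*\to\indi M^*$, the key idea is, given $D\supseteq B$, to apply $\indi m^*$ not to $D$ itself but to the enlarged set $D':=\cl(DC)\supseteq B$. This produces $A'\equiv_{BC}A$ with $A'\indi m_C D'$. I would then verify $A'\indi M_C D$ by unwinding definitions: for any $E$ with $C\seq E\seq \cl(DC)=D'$, and noting that $CD'=D'$, the relation $A'\indi m_C D'$ gives $A'\ind_E D'$; right \ref{MON} on $\ind$ then downgrades this to $A'\ind_E D$, which is exactly what is required for $A'\indi M_C D$.

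The main obstacle is genuinely minor and amounts to noticing the correct bridging set $\cl(DC)$. The hypotheses (\ref{NOR} and \ref{MON} on both sides) do not all enter the computation directly — only right \ref{MON} is actually used in the argument above — and the remaining ones function as background assumptions, ensuring via Propositions \ref{prop:monotonisation} and \ref{prop:forcingextpreservation} that both $\indi M^*$ and $\indi m^*$ are well-behaved enough for the statement to have the expected meaning.
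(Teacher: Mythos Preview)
Your proof is correct, and it takes a different route from the paper's. The paper argues both directions via the axiomatic machinery: for $\indi M^*\to\indi m^*$ it invokes Propositions~\ref{prop:naivemonotonisation} and~\ref{prop:forcingextpreservation} (using that $\indi M^*$ satisfies right \ref{BMON} and \ref{EXT} and $\indi M^*\to\ind$); for $\indi m^*\to\indi M^*$ it uses Corollary~\ref{cor:forcingBMONandEXT} to obtain that $\indi m^*$ satisfies right \ref{CLO} and right \ref{BMON}, then applies Proposition~\ref{prop:monotonisation} to get $\indi m^*\to\indi M$, and finally uses \ref{EXT} to pass to $\indi M^*$. Your argument instead unwinds the definitions directly, with the single concrete trick of feeding $D'=\cl(DC)$ into $\indi m^*$ so that the range of base sets in $\indi m$ over $D'$ already covers all bases needed for $\indi M$ over $D$. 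Your approach is more elementary and transparent (and makes visible that only right \ref{MON} is truly needed), while the paper's is more modular and situates the lemma within its general preservation framework.
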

\begin{proof}
    We have $\indi M^*\to \indi m^*$ by Propositions \ref{prop:naivemonotonisation} and \ref{prop:forcingextpreservation} because $\indi M^*$ satisfies right \ref{BMON} and \ref{EXT}
    and $\indi M^*\to \ind$ . By Corollary \ref{cor:forcingBMONandEXT} $\indi m ^*$ satisfies left and right \ref{NOR}, \ref{MON}, right \ref{CLO}, right \ref{BMON} and \ref{EXT}. As $\indi m ^*\to \ind$ and $\indi m ^*$ satisfies right \ref{CLO}, right \ref{BMON}, we get $\indi m^*\to \indi M$. As  $\indi m ^*$ satisfies \ref{EXT}, we conclude $\indi m^* \to \indi M^*$.
\end{proof}

We can already conclude the following correct version of \cite[Proposition 4.14.]{delbee21acfg}.
\begin{corollary}\label{cor:correctversionofprop4.14}
    Let $\ind$ be an invariant relation satisfying left and right \ref{MON} such that
    \begin{enumerate}
        \item $\ind$ is weaker than $\indi d$
        \item $\ind$ satisfies $\indi h$-amalgamation over models
        \item $\indi m$ satisfies \ref{EXT}
    \end{enumerate}
    then $\indi m = \indi f = \indi d$.
\end{corollary}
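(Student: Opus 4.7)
The plan is to close the cycle $\indi d \to \indi m \to \indi f \to \indi d$, from which the three-way equality is immediate.

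For the first arrow $\indi d \to \indi m$, I apply Proposition \ref{prop:naivemonotonisation} with $\indi 0 = \indi d$, whose hypothesis $\indi d \to \ind$ is exactly (1). What has to be checked is that $\indi d$ itself satisfies right \ref{NOR}, right \ref{MON}, and right \ref{BMON}. The first two are routine. Right \ref{BMON} holds because whenever $C \seq B \seq D$, any $B$-indiscernible sequence is also $C$-indiscernible, so a formula witnessing dividing of $\tp(A/D)$ over $B$ automatically witnesses dividing over $C$; contrapositively, non-dividing over $C$ propagates up to $B$. Note that this has nothing to do with $(\mathghost)$, which concerns extending the right-hand side by $\acl$, not the base.

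For the second and main arrow $\indi m \to \indi f$, hypothesis (3) yields $\indi m = \indi m ^*$: the inclusion $\indi m ^* \to \indi m$ is automatic, and \ref{EXT} for $\indi m$ supplies the reverse. By Corollary \ref{cor:forcingBMONandEXT}, $\indi m ^*$ is invariant and satisfies left and right \ref{MON}, right \ref{NOR}, right \ref{CLO}, right \ref{BMON}, and \ref{EXT}. Applying Proposition \ref{prop:monotonisation} with $\indi 0 = \indi m ^*$ (which requires exactly the right-sided axioms just listed together with $\indi m ^* \to \ind$) produces $\indi m ^* \to \indi M$, and combined with \ref{EXT} of $\indi m ^*$ this upgrades to $\indi m ^* \to \indi M ^*$. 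Finally, Theorem \ref{thm:forcingBMON+EXTforkingnaive} applied with $\indi 0 = \indi h$, using the left and right \ref{MON} of $\ind$ and the $\indi h$-amalgamation provided by (2), gives $\indi M ^* \to \indi f$. Chaining yields $\indi m = \indi m ^* \to \indi M ^* \to \indi f$.

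The closing arrow $\indi f \to \indi d$ is classical, since a dividing formula a fortiori forks. The main obstacle is not any single calculation but rather the bookkeeping between the three closely related relations $\indi m$, $\indi m ^*$, and $\indi M ^*$: Theorem \ref{thm:forcingBMON+EXTforkingnaive} is stated only for $\indi M ^*$, so the argument must ascend from $\indi m$ to $\indi M ^*$ through Corollary \ref{cor:forcingBMONandEXT} and Proposition \ref{prop:monotonisation}, and hypothesis (3) is precisely what lets this ascent be reversed at the $\indi m$--$\indi m ^*$ interface so that the conclusion can be read off as a statement about $\indi m$.
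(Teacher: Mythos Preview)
Your proof is correct and follows essentially the same route as the paper's: both close the cycle $\indi f \to \indi d \to \indi m \to \indi f$, obtain the first arrow from Proposition~\ref{prop:naivemonotonisation}, and feed into Theorem~\ref{thm:forcingBMON+EXTforkingnaive} via $\indi M^{*}$. The only cosmetic difference is that the paper quotes Lemma~\ref{lm:twomonsamextension} to get $\indi M^{*} = \indi m^{*}$ in one stroke, whereas you unpack the relevant direction $\indi m^{*} \to \indi M^{*}$ by hand using Corollary~\ref{cor:forcingBMONandEXT} and Proposition~\ref{prop:monotonisation}; these are exactly the ingredients of the lemma's proof.
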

\begin{proof}
    As $\indi d\to \ind$ and $\indi d$ satisfies \ref{NOR}, \ref{MON} and \ref{BMON} we have $\indi d\to \indi m$ by Proposition \ref{prop:naivemonotonisation}. By Lemma \ref{lm:twomonsamextension} $\indi M ^* = \indi m ^*$, hence by Theorem \ref{thm:forcingBMON+EXTforkingnaive} we have $\indi m ^* \to \indi f$. As $\indi m$ satisfies \ref{EXT} we have $\indi m ^* = \indi m$. We conclude that $\indi f \to \indi d\to \indi m\to \indi f$ i.e. $\indi m = \indi f = \indi d$.
\end{proof}

\begin{remark}
    In Corollary \ref{cor:correctversionofprop4.14}, we may also conclude that $\indi f = \indi d = \indi m = \indi M$, simply because we always have $\indi f\to \indi M\to \indi m$. Indeed $\indi M\to \indi m$ is clear, and $\indi f\to \indi M$ follows from Propositions \ref{prop:monotonisation} and \ref{prop:forcingextpreservation} because $\indi f$ always satisfies right \ref{CLO} and right \ref{BMON}.
\end{remark}

For completeness, we include a proof of the following (which already follows from Theorem \ref{thm:forcingBMON+EXTforkingnaive} and Lemma \ref{lm:twomonsamextension}).

\begin{theorem}\label{thm:forcingBMON+EXTforkingnaive2}
    Let $\indi 0$ be an invariant relation satisfying right \ref{MON}, right \ref{BMON} and \ref{LOC} (for instance, take $\indi 0 = \indi h$).

  Let $\ind$ be an invariant relation satisfying left and right \ref{MON} and the following property ($\indi 0$-amalgamation over models):
  \begin{center}
      if $c_1\equiv_M c_2$ and $c_1\ind_M a$, $c_2\ind_M b$ and $a\indi 0 _M b$
      then there exists $c$ with $c\ind_E ab$ and $c\equiv_{Ma} c_1$, $c \equiv_{Mb} c_2$.
  \end{center}
  Then $\indi{m}^{*} \rightarrow \indi f$. 
\end{theorem}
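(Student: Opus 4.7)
The plan mirrors the argument for Theorem~\ref{thm:forcingBMON+EXTforkingnaive}, replacing $\indi{M}$ by $\indi{m}$ throughout. The key reduction is that it suffices to prove $\indi m^{*} \to \indi d$; once this is established, $\indi m^{*}\to \indi f$ follows immediately because $\indi m^{*}$ satisfies \ref{EXT} by Corollary~\ref{cor:forcingBMONandEXT}, so for $A \indi m^{*}_C B$ and any $D\supseteq B$ one obtains $A'\equiv_{BC} A$ with $A'\indi m^{*}_C D$, hence $A'\indi d_C D$, witnessing $A\indi f_C B$.

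To prove $\indi m^{*}\to \indi d$, I argue by contradiction. Assume $a\indi m^{*}_C B$ while some $\psi(x,b)\in\tp(a/BC)$ divides over $C$, witnessed by a $C$-indiscernible sequence $(b_n)_{n<\omega}$ with $b_0 = b$ and $\{\psi(x,b_n)\}_n$ $k$-inconsistent. Apply Lemma~\ref{lm:LOCcharactergivesMS} to find a model $M\supseteq C$ over which $(b_n)_n$ is indiscernible and a $\indi 0$-Morley sequence (so $b_n \indi 0_M b_{<n}$ for each $n$). Then apply \ref{EXT} of $\indi m^{*}$ to produce $a'\equiv_{BC} a$ with $a'\indi m^{*}_C M(b_n)_n B$; in particular $a'\indi m_C M(b_n)_n B$. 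Unpacking the latter with base $E = M$ (valid because $C\seq M$) and using right \ref{MON} of $\ind$ gives $a'\ind_M b_j$ for every $j$; we also have $a'\models \psi(x,b_0)$.

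The heart of the argument is a standard inductive construction via the $\indi 0$-amalgamation property, building tuples $a_n$ with the invariants $a_n b_i \equiv_M a' b_0$ for all $i\leq n$ and $a_n\ind_M b_{\leq n}$. Set $a_0 = a'$. At step $n+1$, use $M$-indiscernibility of $(b_n)_n$ to transport $a'$ via an $M$-automorphism sending $b_0\mapsto b_{n+1}$, obtaining $a''$ with $a''\ind_M b_{n+1}$ (by invariance of $\ind$, applied to $a'\ind_M b_0$) and $a''\equiv_M a_n$ (both are $\equiv_M a'$). Apply $\indi 0$-amalgamation with $c_1 = a''$, $c_2 = a_n$, the ``$a$''-slot being $b_{n+1}$ and the ``$b$''-slot being $b_{\leq n}$: the premise $b_{n+1}\indi 0_M b_{\leq n}$ is exactly the Morley condition. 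The resulting $c$ is taken as $a_{n+1}$ and preserves both invariants. By compactness the partial type $\{\psi(x,b_n):n<\omega\}$ is consistent (any finite subset is realized by a sufficiently late $a_n$ via the first invariant), contradicting $k$-inconsistency.

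The main obstacle is aligning the direction of the $\indi 0$-Morley property with the amalgamation premise: the Morley condition provides $b_{n+1}\indi 0_M b_{\leq n}$ but not a priori the reverse, forcing the specific slot assignment where $b_{n+1}$ occupies the first argument of the amalgamation statement; this avoids requiring symmetry of $\indi 0$, which is not among the hypotheses. A secondary subtlety is that the bases $E$ used when unpacking $a'\indi m_C M(b_n)_n B$ must satisfy $C\seq E\seq M(b_n)_n BC$, which is precisely why $C$ must be absorbed into a model $M$ at the outset via Lemma~\ref{lm:LOCcharactergivesMS}.
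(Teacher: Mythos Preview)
Your proposal is correct and follows essentially the same route as the paper: reduce to $\indi m^{*}\to\indi d$ (then pass to $\indi f=\indi d^{*}$ via \ref{EXT}), invoke Lemma~\ref{lm:LOCcharactergivesMS} to place the indiscernible sequence as a $\indi 0$-Morley sequence over a model $M\supseteq C$, use \ref{EXT} and \ref{BMON} to land $a'\ind_M b_0$, and then run the inductive $\indi 0$-amalgamation exactly as the paper does. The only cosmetic differences are that the paper argues directly (producing $a''$ with $a''b_i\equiv_C ab$ for all $i$) rather than by contradiction via a dividing formula, and only extends $a'$ over $bM$ rather than over $M(b_n)_nB$---your larger extension is harmless but unnecessary, since only $a'\ind_M b_0$ is used before transporting by $M$-automorphisms.
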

\begin{proof}
By Corollary \ref{cor:forcingBMONandEXT}, the relation ${\indi{m}} ^{*}$ satisfies satisfies left and right \ref{MON}, right \ref{NOR}, right \ref{CLO}, right \ref{BMON} and \ref{EXT}. Note that we do not use right \ref{CLO} in this proof.

We show that $\indi m ^* \rightarrow \indi d$, the result follows from $\indi f = {\indi d}^*$. 

Assume that $a\indi m^* _C b$, for some $a,b,C$. Let $(b_i)_{i<\omega}$ be a $C$-indiscernible sequence with $b = b_0$. By Lemma \ref{lm:LOCcharactergivesMS}, there exists a model $M\supseteq C$ such that $(b_i)_{i<\omega}$ is an $M$-indiscernible $\indi 0$-Morley sequence over $M$, i.e. $b_i\indi 0_M b_{<i}$ for all $i<\omega$.

By \ref{EXT} there exists $a'$ such that $a'\equiv_{Cb} a$ and $a' \indi m ^*_{C}   b M$. It follows from \ref{BMON} and right \ref{MON} that
    $$a' \ind_{M} b.$$

For each $i\geq 0$ there exists an automorphism $\sigma_i$ over $M$ sending $b = b_0$ to $b_i$, so setting $a_i' = \sigma_i(a')$ we have:
$a_i'b_i \equiv_{M} a'b$ hence by invariance $a_i'\ind_{M} b_i$. Note that $a'b\equiv_C ab$.

\begin{claim}
    There exists $a''$ such that $a''b_i \equiv_{M} a'b$ for all $ i<\omega$.
\end{claim} 

\begin{proof}[Proof of the claim]
    By induction and compactness, it is sufficient to show that for all $i<\omega$, there exists $a_i''$ such that for all $k\leq i$ we have $a''_i b_k \equiv_{M} a' b$ and $a_i''\ind_{M} b_{\leq i}$. For the case $i = 0$ take $a''_0 = a'$. Assume that $a_i''$ has been constructed, we have 
$$ a_{i+1}' \ind_{M}b_{i+1}\text{ and } b_{ i+1} \indi{0}_{M} b_{\leq i}  \text{ and }  a_i''\ind_{M}b_{\leq i}.$$

As $a_{i+1}' \equiv_{M} a_i''$, by $\indi{0}$-amalgamation over models, there exists $a_{i+1}''$ such that 
\begin{enumerate}
    \item $a_{i+1}''b_{i+1} \equiv_{M} a_{i+1}'b_{i+1}$
    \item $a_{i+1}'' b_{\leq i}\equiv_{M} a_i''b_{\leq i}$
    \item $a_{i+1}'' \ind_{M} b_{\leq i+1}$.
\end{enumerate}
By induction and compactness, there exists $a''$ such that $a''b_i \equiv_{M} a b$ for all $i<\omega$, which proves the claim.
\end{proof}

Let $a''$ be as in the claim, then as $a'b\equiv_C ab$ we have $a''b_i \equiv_C ab$ for all $i<\omega$, hence $a\indi{d}_C b$.
\end{proof}

Another consequence of Theorem \ref{thm:forcingBMON+EXTforkingnaive2} (or Lemma \ref{lm:twomonsamextension}) is the following, which implies that either version of the monotonisation are enough to get a description of forking in NSOP$_1$ theories.
\begin{proposition}\label{prop:twomonsameextenNSOP1}
    Let $T$ be an NSOP$_1$ theory and $\indi K$ be Kim-independence in $T$. Then
    \[\indi f = (\indi K\ )^{M*} = (\indi K\ )^{m*}\]
\end{proposition}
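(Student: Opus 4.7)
The plan is to deduce the proposition directly from the axiomatic machinery of Section~\ref{sec:axiomaticnaivemon}, together with standard properties of Kim-independence in NSOP$_1$ theories. Concretely, the proof will rest on three ingredients: applying Theorem~\ref{thm:forcingBMON+EXTforkingnaive} (and Theorem~\ref{thm:forcingBMON+EXTforkingnaive2}) with $\ind = \indi K$ and $\indi 0 = \indi h$, applying Lemma~\ref{lm:twomonsamextension} to identify the two $*$-monotonisations, and verifying the reverse inclusion $\indi f \to (\indi K )^{M*}$ by direct axiomatic chasing.

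The first step is to check that $\indi K$ in an NSOP$_1$ theory satisfies the hypotheses of Theorems~\ref{thm:forcingBMON+EXTforkingnaive} and~\ref{thm:forcingBMON+EXTforkingnaive2}, namely invariance, left and right \ref{MON}, left and right \ref{NOR}, and crucially $\indi h$-amalgamation over models. The first four are essentially built into the definition of Kim-dividing over models. The $\indi h$-amalgamation is the Kaplan--Ramsey independence theorem in its sharper form, where one only assumes $a \indi h_M b$ rather than the stronger $a \indi K_M b$; the sharpening uses that coheir sequences over a model are $\indi K$-Morley. Once these are in place, Theorem~\ref{thm:forcingBMON+EXTforkingnaive2} gives $(\indi K )^{m*} \to \indi f$, Theorem~\ref{thm:forcingBMON+EXTforkingnaive} gives $(\indi K )^{M*} \to \indi f$, and Lemma~\ref{lm:twomonsamextension} (applicable since $\indi K$ satisfies left and right \ref{NOR} and \ref{MON}) yields $(\indi K )^{M*} = (\indi K )^{m*}$.

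For the reverse inclusion $\indi f \to (\indi K )^{M*}$, I would argue as follows. In any NSOP$_1$ theory, Kim-dividing uses a specific form of Morley sequence (coming from $M$-invariant types) which is in particular indiscernible, so Kim-dividing implies dividing; hence $\indi f \to \indi d \to \indi K$. Next, $\indi f$ always satisfies right \ref{NOR}, \ref{MON}, \ref{CLO} and \ref{BMON}, so the third bullet of Proposition~\ref{prop:monotonisation} promotes $\indi f \to \indi K$ to $\indi f \to (\indi K )^M$. Finally, $\indi f$ satisfies \ref{EXT}, and for any relations $\ind_1, \ind_2$ with $\ind_1 \to \ind_2$ and $\ind_1$ satisfying \ref{EXT}, one has $\ind_1 \to \ind_2^*$ by unfolding the definition of $(\cdot)^*$. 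Therefore $\indi f \to (\indi K )^{M*}$, which combined with the previous paragraph gives the claimed chain of equalities.

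The main obstacle I anticipate is the verification of $\indi h$-amalgamation for $\indi K$ over models: this is the genuine NSOP$_1$ content and requires the sharper form of the independence theorem (hypothesis on $(a,b)$ weakened to coheir-independence), rather than the bare Kaplan--Ramsey version with $a \indi K_M b$. Everything else reduces to a direct application of the general framework developed in Section~\ref{sec:axiomaticnaivemon}.
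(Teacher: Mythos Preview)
Your proposal is correct and follows essentially the same route as the paper's proof: apply the amalgamation criterion (Theorem~\ref{thm:forcingBMON+EXTforkingnaive2}) to $\indi K$ to obtain $(\indi K)^{m*}\to\indi f$, and close the loop via $\indi f\to(\indi K)^{M*}\to(\indi K)^{m*}$. The only notable difference is your choice of $\indi 0=\indi h$ where the paper takes $\indi 0=\indi K$; both work, and your choice has the advantage that $\indi h$ unambiguously satisfies the \ref{BMON} and \ref{LOC} hypotheses required of $\indi 0$.

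One remark: your anticipated ``main obstacle'' is not an obstacle at all. Since $\indi h\to\indi K$ over models, the hypothesis $a\indi h_M b$ is \emph{stronger} than $a\indi K_M b$, so $\indi h$-amalgamation for $\indi K$ is an immediate consequence of the ordinary Kaplan--Ramsey independence theorem---no sharper version is needed. Also, invoking both Theorem~\ref{thm:forcingBMON+EXTforkingnaive} and Theorem~\ref{thm:forcingBMON+EXTforkingnaive2} together with Lemma~\ref{lm:twomonsamextension} is redundant: once you have $(\indi K)^{m*}\to\indi f$ and the chain $\indi f\to(\indi K)^{M*}\to(\indi K)^{m*}$, you are done.
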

\begin{proof}
    We already have $\indi f \to (\indi K)^{M*} \to (\indi K\ )^{m*}$. In an NSOP$_1$ theory, $\indi K$ satisfies the hypothesis of Theorem \ref{thm:forcingBMON+EXTforkingnaive2} with $\ind = \indi 0 = \indi K$ thus $(\indi K\ )^{m*}\to \indi f$.
\end{proof}

\begin{remark}
    In ACFG, we have that $\indi K^{M*} = \indi K ^M = \indi f$ and $\indi K^m\neq \indi K^M$ (otherwise $\indi K^M\to \indi d\to \indi K^m$ would yield $\indi f = \indi d)$). In particular $\indi K ^m \neq \indi K ^{m*}$ and $\indi K ^m$ fails \ref{EXT}. Note that in ACFG, we have $\indi K ^{m*} = \indi K ^{M*}$ but $\indi K ^m\neq \indi K ^M$.
\end{remark}

\subsection{Forcing algebraicity}

Now we expand a bit on ideas from \cite{conant2023surprising}.

\begin{definition}\label{def:rightclosure}
    Let $\ind$ be a ternary relation. We associate the \textit{(right) closure extension $\indi{c}$ of $\ind$} which is defined as :
    \[A\indi{c}_C B\iff A\ind_C \acl(BC).\]
\end{definition}

\begin{remark}
    In \cite{conant2023surprising}, we have $\indi d ^c = \indi {da}$. 
\end{remark}

We naturally have the following:
\begin{proposition}\label{prop:forcingclosure}
    Let $\ind$ be an invariant relation. Then $\indi c$ satisfies right \ref{CLO} and right \ref{NOR}. If $\ind$ satisfies right \ref{MON}, then $\indi c\to \ind$. \begin{itemize}
        \item If $\ind$ satisfies left or right \ref{MON}, left \ref{NOR}, so does $\indi c$.
        \item If $\indi 0\to \ind$ and $\indi 0$ satisfies right \ref{NOR} and right \ref{CLO}, then $\indi 0 \to \indi c$.
        \item If $\ind$ satisfies right \ref{BMON} then so does $\indi c$.
    \end{itemize}
\end{proposition}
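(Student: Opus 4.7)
The plan is to check each bullet by unwinding the definition $A\indi{c}_C B\iff A\ind_C \acl(BC)$ and then applying the corresponding property of $\ind$ (resp.\ $\indi 0$) after passing inside $\acl(BC)$. The two algebraic facts that will be used throughout are that $\acl$ is idempotent ($\acl(\acl(X))=\acl(X)$) and absorbs $C$ in the relevant positions ($\acl(\acl(B)C) = \acl(BC)$ and $\acl(BCC) = \acl(BC)$); these two identities immediately yield right \ref{CLO} and right \ref{NOR} for $\indi c$, since the relation $A\ind_C \acl(BC)$ is manifestly invariant under replacing $B$ by $\acl(B)$ or by $BC$.

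For (2), if $\ind$ satisfies right \ref{MON}, then $A\ind_C \acl(BC)$ gives $A\ind_C B$ using that $B\seq \acl(BC)$, which is right \ref{MON} applied on the right; so $\indi c\to \ind$. For the first sub-bullet of (3), left \ref{MON} for $\indi c$ is immediate by applying left \ref{MON} of $\ind$ to the statement $A\ind_C \acl(BC)$. Right \ref{MON} uses that $\acl(BC)\seq \acl(BDC)$, so right \ref{MON} of $\ind$ lets us pass from $A\ind_C \acl(BDC)$ to $A\ind_C \acl(BC)$. Left \ref{NOR} is similar: apply left \ref{NOR} of $\ind$ to $A\ind_C \acl(BC)$ to obtain $AC\ind_C \acl(BC)$.

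For (4), assuming $\indi 0 \to \ind$ together with right \ref{NOR} and right \ref{CLO} for $\indi 0$, the plan is to start from $A\indi 0_C B$, successively apply right \ref{NOR} and right \ref{CLO} to reach $A\indi 0_C \acl(BC)$, then invoke $\indi 0\to \ind$ to conclude $A\ind_C \acl(BC)$, i.e.\ $A\indi c_C B$. For (5), given $C\seq B\seq D$ and $A\indi c_C D$, observe $\acl(BD) = \acl(D)$ since $B\seq D$, and then $C\seq B\seq \acl(D)$, so right \ref{BMON} of $\ind$ upgrades $A\ind_C \acl(D)$ to $A\ind_B \acl(D)$, which is exactly $A\indi c_B D$.

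The proof is essentially bookkeeping, and the only mildly delicate point is keeping track of which $\acl$'s collapse — in particular the equalities $\acl(\acl(B)C)=\acl(BC)$ (needed for right \ref{CLO}) and $\acl(BD)=\acl(D)$ under $B\seq D$ (needed for right \ref{BMON}). No hypothesis beyond invariance of $\ind$ and idempotence of $\acl$ is required for (1), which justifies isolating those two assertions before listing the conditional properties.
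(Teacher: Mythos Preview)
Your proof is correct and follows essentially the same route as the paper's own argument: unwind the definition of $\indi c$ and reduce each claim to the corresponding property of $\ind$ (or $\indi 0$) applied with $\acl(BC)$ on the right. Your treatment of right \ref{CLO} via $\acl(\acl(B)C)=\acl(BC)$ is in fact slightly cleaner than the paper's, which passes through $\acl(\acl(BC))=\acl(BC)$ to get $A\indi c_C \acl(BC)$ rather than $A\indi c_C \acl(B)$ directly; and your handling of right \ref{BMON} is the same argument up to a relabeling of the variables $B$ and $D$.
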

\begin{proof}
    For right \ref{CLO}: if $A\indi c_C B$ we have $A\ind_C \acl(BC)$. As $\acl(\acl(BC)) = \acl(BC)$ we have $A\ind_C \acl(\acl(BC))$ i.e. $A\indi c _C \acl(BC)$. For right \ref{NOR}: if $A\indi c_C B$ then $A\ind_C \acl(BC)$ so $A\indi c_C BC$. If $\ind$ satisfies right \ref{MON}, then $A\indi c_C B$ implies $A\ind_C \acl(BC)$ which implies $A\ind_C B$.

    For the first item: the left-sided properties are trivial. If $\ind$ satisfies right \ref{MON}, then $A\indi c_C BD$ implies $A\ind_C \acl(CBD)$ which implies $A\ind_C \acl(BC)$ so $A\indi c _C B$. 

    For the second item, if $A\indi 0_C B$ then by right \ref{NOR} and right \ref{CLO} we have $A\indi 0_C \acl(BC)$ which implies $A\ind_C \acl(BC)$ so $A\indi c_C B$.

    For the third item, assume that $A\indi c _C B$ and $C\seq D\seq B$. Then $A\ind_C \acl(B)$ hence by \ref{BMON} $A\ind_D \acl(B)$ so $A\indi c _D B$.
\end{proof}

\begin{proposition}\label{prop:naivetomonviac}
    Assume that $\ind$ satisfies right \ref{NOR} and right \ref{MON} then 
    \[(\indi m)^c\to \indi M \]
    If $\ind$ further satisfies right \ref{CLO} then $(\indi m)^c = \indi M$.
\end{proposition}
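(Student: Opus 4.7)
My plan is to unfold all three definitions and match the quantifier ranges. The key preliminary observation is that since $C \seq \acl(BC)$, the condition $A(\indi m)^c_C B$, which by definition is $A \indi m_C \acl(BC)$, unfolds to: for every $D$ with $C \seq D \seq \acl(BC)$, one has $A \ind_D \acl(BC)$. The condition $A \indi M_C B$ says: for every such $D$, one has $A \ind_D B$. Thus the two universal statements quantify over exactly the same family of $D$'s, and differ only in whether the right-hand argument of $\ind$ is $\acl(BC)$ or $B$. Both implications then reduce to passing between these two right-hand arguments using the axioms.

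For the forward direction $(\indi m)^c \to \indi M$, I would fix $D$ with $C \seq D \seq \acl(BC)$, start from $A \ind_D \acl(BC)$, and apply right \ref{MON} (using $B \seq \acl(BC)$) to conclude $A \ind_D B$. Strictly speaking only right \ref{MON} is needed here; right \ref{NOR} is included in the hypothesis as harmless background, matching the symmetric form of the statement.

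For the converse, assuming additionally right \ref{CLO}, I would fix $D$ with $C \seq D \seq \acl(BC)$ and start from $A \ind_D B$. Right \ref{NOR} gives $A \ind_D BD$, then right \ref{CLO} gives $A \ind_D \acl(BD)$. Because $C \seq D \seq \acl(BC)$, one has $\acl(BD) = \acl(BC)$ (both inclusions are immediate), so in fact $A \ind_D \acl(BC)$ directly; if one prefers, right \ref{MON} gives the same conclusion. This is exactly the condition for $A(\indi m)^c_C B$.

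There is no substantive obstacle: the whole argument is a bookkeeping exercise with the axioms. The only tiny point requiring care is the reduction $\acl(BC) \cup C = \acl(BC)$ needed to compute the quantifier range of $A \indi m_C \acl(BC)$ at the outset — without this remark one might suspect a mismatch between the $D$-ranges of $(\indi m)^c$ and $\indi M$.
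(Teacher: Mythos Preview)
Your proof is correct. The direct unfolding you carry out works exactly as you describe: both $(\indi m)^c$ and $\indi M$ quantify over the same family of $D$ with $C\seq D\seq\acl(BC)$ (your remark that $\acl(BC)\cup C=\acl(BC)$ is precisely the bookkeeping needed here), and then right \ref{MON} handles one direction while right \ref{NOR} followed by right \ref{CLO} and the identity $\acl(BD)=\acl(BC)$ handles the other.

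The paper takes a different, more modular route: rather than unfolding, it shows that $(\indi m)^c$ inherits the right-sided instances of \ref{NOR}, \ref{MON}, \ref{BMON}, \ref{CLO} from the earlier Propositions on $\indi m$ and $\indi c$, and then invokes the general fact that any relation with these four properties that implies $\ind$ must imply $\indi M$; the converse is obtained symmetrically by showing $\indi M$ has the properties needed to push it through $\indi m$ and then $\indi c$. Your argument is shorter and more transparent for this single statement; the paper's approach has the advantage of reusing the abstract preservation lemmas already in place, keeping the presentation uniform with the rest of the section. Your observation that right \ref{NOR} is not actually needed for the forward implication is also correct, and in fact the paper's chain of propositions does not genuinely use it there either.
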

\begin{proof}
    First, $\indi m$ satisfies the right-sided version of \ref{NOR}, \ref{MON}, \ref{BMON}, by Proposition \ref{prop:naivemonotonisation}. By Proposition \ref{prop:forcingclosure} we have $\indi m ^c$ satisfies the right-sided version of \ref{NOR}, \ref{MON}, \ref{BMON} and \ref{CLO}. Again by Proposition \ref{prop:forcingclosure} $\indi m ^c\to \ind$ hence by Proposition \ref{prop:monotonisation} we have $\indi m ^c \to \indi M$.

    Assume that $\ind$ satisfies right \ref{CLO}, then by Proposition \ref{prop:monotonisation}, $\indi M$ satisfies the right-sided instances of \ref{NOR} \ref{MON}, \ref{BMON} and \ref{CLO}. As $\indi M\to \ind$, from Proposition \ref{prop:naivemonotonisation} we have $\indi M\to \indi m$. From $\indi M\to \indi m$ and Proposition \ref{prop:forcingclosure} we get $\indi M\to \indi m ^c$.
\end{proof}

The essential difference between $\indi M $ and $\indi m$ is that the former preserves right \ref{CLO} and the other one does not a priori. However this distinction does not appear in a pregeometric theory:
\begin{fact}[\cite{conant2023surprising}, Remark 2.19]\label{fact:monpregeom}
    If $T$ is pregeometric then $\indi a ^M = \indi a ^m$.
\end{fact}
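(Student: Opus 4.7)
The plan is to prove the stronger statement that in a pregeometric theory $\indi a$ already equals both of its monotonisations, i.e.\ $\indi a = \indi a ^m = \indi a ^M$. The inclusion $\indi a ^M \to \indi a ^m$ is immediate, since $C \seq D \seq BC$ forces $C \seq D \seq \acl(BC)$, so only the converse direction uses pregeometricity.

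For the heart of the argument I would invoke the standard matroid characterization valid in any pregeometric theory: $A \indi a _C B$ if and only if $\dim(A/BC) = \dim(A/C)$, where $\dim$ denotes the $\acl$-dimension. Granting this, the key claim is that $A \indi a _C B$ together with $C \seq D \seq \acl(BC)$ implies $A \indi a _D B$. First I would note that $\acl(BD) = \acl(BC)$: the inclusion $\supseteq$ follows from $D \supseteq C$, and $\seq$ follows from $D \seq \acl(BC)$. Hence $\dim(A/BD) = \dim(A/BC) = \dim(A/C)$ by hypothesis. Since $\dim(A/{-})$ is monotone decreasing in the base, this produces the sandwich $\dim(A/C) \geq \dim(A/D) \geq \dim(A/BD) = \dim(A/C)$, forcing $\dim(A/D) = \dim(A/BD)$, which by the matroid characterization is exactly $A \indi a _D B$.

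Specialising $D$ to range through subsets of $BC$ containing $C$ gives $\indi a = \indi a ^m$, and letting it range through subsets of $\acl(BC)$ containing $C$ gives $\indi a = \indi a ^M$, so the two coincide.

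The main obstacle, if one can call it that, is simply the matroid characterization of $\indi a$ in pregeometric theories, which is where exchange is genuinely used; the rest is a one-line dimension count. If I wanted to sidestep quoting that characterization, I would instead choose a basis $E \seq D$ of $D$ over $C$ and use exchange directly to find $E' \seq BC$ with $\acl(E'C) = \acl(EC) = \acl(D)$, thereby reducing an $\indi a ^M$-instance to an $\indi a ^m$-instance at the level of the individual $D$. But the dimension-squeeze route above is visibly cleaner.
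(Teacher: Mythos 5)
The paper states this as a quoted Fact with no proof, so there is nothing internal to compare against; but your argument has a genuine error at its foundation. The ``standard matroid characterization'' you invoke --- $A\indi a_CB$ iff $\dim(A/BC)=\dim(A/C)$ --- is false. The relation $\indi a$ is defined by $\acl(AC)\cap\acl(BC)=\acl(C)$, and in a \emph{non-modular} pregeometric theory this is strictly weaker than dimension-independence. Concretely, in $\ACF$ take $a_1,a_2,b_1$ algebraically independent and $b_2=a_1b_1+a_2$: then $\acl(a_1a_2)\cap\acl(b_1b_2)=\acl(\emptyset)$, so $a_1a_2\indi a_{\emptyset}b_1b_2$, yet $\dim(a_1a_2/b_1b_2)=1<2=\dim(a_1a_2/\emptyset)$. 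The same example refutes the stronger statement you set out to prove: taking $D=\set{b_1}\seq B$ one has $b_2\in\acl(a_1a_2b_1)\cap\acl(b_1b_2)$ but $b_2\notin\acl(b_1)$, so $a_1a_2\nindi a_{b_1}b_1b_2$ and hence $a_1a_2\nindi a{}^m_{\emptyset}\,b_1b_2$. Thus $\indi a\neq\indi a^m$ already in $\ACF$, which is pregeometric; the Fact only asserts that the two \emph{monotonisations} coincide, not that either coincides with $\indi a$.

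Your dimension-squeeze is nevertheless the right second half of a correct proof, applied to the right relation. Write $A\indi{\dim}_CB$ for $\dim(A/BC)=\dim(A/C)$. The squeeze you performed shows exactly that $\indi{\dim}\to\indi a^M$ (using $\acl(BD)=\acl(BC)$ for $C\seq D\seq\acl(BC)$, and the true direction ``$\dim(A/BD)=\dim(A/D)$ implies $A\indi a_DB$''). Since $\indi a^M\to\indi a^m$ is trivial, what is missing is $\indi a^m\to\indi{\dim}$, and this is where exchange genuinely enters: if $\dim(A/BC)<\dim(A/C)$, choose a minimal tuple $b_1,\dots,b_k$ from $BC$ with $\dim(A/Cb_1\cdots b_k)<\dim(A/C)$; minimality plus exchange yields $b_k\in\acl(ACb_{<k})\setminus\acl(Cb_{<k})$, so $D=Cb_{<k}\seq BC$ witnesses the failure of $\indi a^m$. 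With that lemma in place your argument closes the circle $\indi a^m\to\indi{\dim}\to\indi a^M\to\indi a^m$, which is the intended content of the Fact (and shows all three agree with dimension-independence); without it, the proof both relies on a false equivalence and proves a false conclusion.
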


\begin{remark}\label{rk:pregeommon}
In ACFG, we have $\indi K\to \indi \alg\ $. As in ACFG, the algebraic closure coincide with the field theoretic algebraic closure (see \cite[Section 2.1]{delbee21acfg}), we have that $\indi \alg = (\indi a)^m = (\indi a)^M$. In particular:\[\indi d\to \indi K\to (\indi a)^M.\]
\end{remark}

\bibliographystyle{plain}
\bibliography{biblio}

\end{document}